\newtheorem{thm}{Theorem}[section]
\newtheorem{lem}[thm]{Lemma}
\newtheorem{thmabc}{Theorem}
\title{A Tauberian theorem for  Ingham summation method}
\author{Vytas Zacharovas\\
    Institute of Statistical Science\\
    Academia Sinica\\
    Taipei 115\\
    Taiwan}
\begin{document}

\maketitle

\begin{abstract}
\noindent The aim of this work is to prove a Tauberian theorem for
the Ingham summability method. The Tauberian theorem we prove is
then applied to analyze asymptotics of mean values of multiplicative
functions on natural numbers.

 \noindent\emph{Key words}: Ingham summability, Tauberian theorems,
 mean values, multiplicative functions.
\end{abstract}

\section{Introduction}
Many problems in number theory involve estimating mean values
\begin{equation}
\label{F_mean_value} \frac{1}{n}\sum_{m=1}^nf(m)
\end{equation}
of
some complex valued function  $f:\mathbb{N}\to \mathbb{C}$.
In many cases $f(m)$ can be naturally represented as a sum
$\sum_{k|m}a_k$ where $a_k\in \mathbb C$. M\"{o}bius inversion
formula guarantees that for a given $f(m)$ such $a_k$ always exist
and are unique.
 Replacing $f(m)$ by $\sum_{k|m}a_k$ in the  sum of values of $f(m)$, we get
\[
\sum_{ m=1}^nf(m)=\sum_{m=1}^n\sum_{k|m}a_k
=\sum_{k=1}^na_k\left[\frac{n}{k} \right],
\]
here \([x]\) denotes the integer part of a real number \(x\).
Suppose we  want to know under which conditions the sequence of the
mean values (\ref{F_mean_value}) of $f(m)$ has a limit as $n\to
\infty$. This is equivalent to the question, under which conditions
on $a_k$ the sequence
\begin{equation}
\label{F_a_k} \frac{1}{n}\sum_{ k=1}^na_k\left[\frac{n}{k} \right]
\end{equation}
has a limit as $n\to \infty$. If, say
\[
\sum_{k\geqslant 1}\frac{|a_k|}{k}<\infty,
\]
then the theorem of Wintner (see e. g.
\cite{postnikov_1988_analytic}) states that
\begin{equation}
\label{limit}
\lim_{n\to\infty}\frac{1}{n}\sum_{
k=1}^na_k\left[\frac{n}{k} \right]=\sum_{k=1}^\infty\frac{a_k}{k}.
\end{equation}
It was shown in \cite{hardy_divergent}
that the convergence of the series
\begin{equation}
\label{the_series}
\sum_{k=1}^\infty\frac{a_k}{k}
\end{equation}
alone, does not necessarily imply the existence of the limit of sum
(\ref{F_a_k}) as $n\to \infty$. In 1910 Axer \cite{axer_1910} (see
also Chapter 3.6 of \cite{postnikov_1988_analytic}) proved that if
in addition to convergence of the series (\ref{the_series}) the
condition
\begin{equation}
\label{axer's_condition}
\sum_{k=0}^n|a_k|=O(n)
\end{equation}
is satisfied, then  the limit (\ref{limit}) exists.

We will show that in determining whether the sum (\ref{F_a_k}) has a
limit, an important role is played by the quantity
\begin{equation}
\label{define_S}
S(x)=\sum_{m\leqslant x}\sum_{k|m}a_k\log k
=\sum_{k\leqslant x}a_k\left[\frac{x}{k} \right]\log k.
\end{equation}
We will prove  (see Lemma \ref{convergence_of_the_series_lemma})
that condition $S(x)=O(x\log x)$ as $x\to \infty$ is enough to
ensure that the Dirichlet series
$\sum_{m=1}^\infty{a_m}{m^{-\sigma}}$ converges for all $\sigma>1$.
Which means that the function
\begin{equation}
\label{def_g} g(\sigma)=\sum_{m=1}^\infty\frac{a_m}{m^\sigma}
\end{equation}
will be correctly defined  in the infinite interval $\sigma>1$. The
next theorem shows that if $S(x)=o(n\log n)$ then the value of the
sum (\ref{F_a_k}) can be approximated by the values Dirichlet series
$g(\sigma)$ with $\sigma=1+\log^{-1} n$.
\begin{thm}
\label{first_thm} Suppose $a_n$ is a sequence of complex numbers
such that
\[
S(n)=\sum_{k\leqslant n}a_k\left[\frac{n}{k} \right]\log k=o(n\log
n).
\]
Then
\[
\frac{1}{n}\sum_{k\leqslant n}a_k\left[\frac{n}{k}
\right]=g\left(1+\frac{1}{\log n}\right)+o(1),
\]
as $n\to \infty$.
\end{thm}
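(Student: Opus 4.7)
Writing $\sigma_n=1+1/\log n$, the goal is to estimate $g(\sigma_n)-T(n)/n$ with $T(n):=\sum_{k\le n}a_k[n/k]$. My main analytic tool is the Dirichlet-series identity
\[
-g'(u)\,\zeta(u)=\sum_{m\ge1}\frac{c_m}{m^u}=u\int_1^\infty\frac{S(x)}{x^{u+1}}\,dx,\qquad u>1,
\]
in which $c_m:=\sum_{k\mid m}a_k\log k$ has partial sum $S(n)$; the first equality is the Dirichlet convolution of $a_k\log k$ with the constant sequence $1$, and the second is Abel summation. Given $\varepsilon>0$, choosing $N_\varepsilon$ with $|S(x)|\le\varepsilon x\log x$ for $x\ge N_\varepsilon$ and splitting the integral at $N_\varepsilon$ yields $|g'(u)|\le K_\varepsilon+K\varepsilon/(u-1)$ on $(1,2]$ after dividing by $\zeta(u)\sim(u-1)^{-1}$.

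Next I would use the parallel Mellin representations $g(s)\zeta(s)=s\int_1^\infty T(x)/x^{s+1}\,dx$ and $\zeta(s)/s=\int_1^\infty[x]\,x^{-s-1}\,dx$ to produce
\[
g(\sigma_n)-\frac{T(n)}{n}=\frac{\sigma_n}{\zeta(\sigma_n)}\int_1^\infty\frac{T(x)-(T(n)/n)\,[x]}{x^{\sigma_n+1}}\,dx.
\]
Since $\sigma_n/\zeta(\sigma_n)\sim 1/\log n$, it suffices to bound this integral by $o(\log n)$. Integration by parts brings in the jumps $b_m=\sum_{k\mid m}a_k$ of $T(x)$, and one substitutes $b_m\log m=c_m+e_m$ with $e_m:=\sum_{k\mid m}a_k\log(m/k)$; the $c_m$-contribution then reduces directly to a weighted integral against $S(x)$, which is $o(\log n)$ by the preceding estimate on $g'$.

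I expect the main obstacle to be controlling the $e_m$-contribution. Since $\sum_{m\le n}e_m=\sum_{k\le n}a_k\,L(n/k)$ with Stirling's $L(x)=x\log x-x+O(\log x)$, this piece expands into partial sums of the quantities $a_k\log(n/k)/k$, $a_k/k$ and $|a_k|\log(n/k)$, none of which we can bound individually without an Axer-type hypothesis. The proof must therefore show that after all Abel summations have been assembled and terms regrouped, the surviving contribution is again expressible as a weighted integral of $S(x)/(x\log x)=o(1)$, so that the Tauberian hypothesis closes the argument. Keeping uniform control of the Stirling error $O(\log x)$ through this recombination, so that it does not spoil the $o(1)$ estimate, is the delicate technical core I would expect to absorb most of the effort.
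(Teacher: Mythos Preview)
Your setup through the Mellin identity for $g(\sigma_n)-T(n)/n$ is correct, but the proposal stops precisely where the real content begins. You yourself flag the $e_m$-contribution as the obstacle and then assert that ``the proof must therefore show'' it can be re-expressed as a weighted integral of $S(x)/(x\log x)$. That is not a proof step; it is a hope, and nothing in the sketch indicates \emph{how} the recombination would be carried out. The quantities $\sum_{k\le n}a_k\log(n/k)/k$ and $\sum_{k\le n}a_k/k$ genuinely cannot be controlled by the hypothesis $S(x)=o(x\log x)$ alone, and you have not exhibited the cancellation mechanism that would make them disappear.

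The paper's route is structurally quite different and explains why your elementary plan is unlikely to close. Rather than working with the raw Mellin integral, the paper writes an exact identity for $T(n)-ng(\sigma_n)-S(n)/\log n$ as a double sum over $S(k)$ weighted by integrals of the auxiliary sums $F_t(x)=\sum_{m\le x}\prod_{p\mid m}(1-p^{-t})$, and then proves a pointwise inequality of the form
\[
\Bigl|T(n)-ng(\sigma_n)\Bigr|\ll\sum_{k=2}^{n}c_{n,k}|S(k)|+\frac{n}{\log n}\sum_{k\ge n}\frac{|S(k)|}{k^{2+1/\log n}\log k},
\]
with coefficients $c_{n,k}\ge0$ satisfying $\sum_{k}c_{n,k}k(\log k)^{\varepsilon}\ll n(\log n)^{\varepsilon-1}$. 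Establishing this coefficient bound is the heart of the argument, and for small $k$ it requires estimating $F_t(n/k)-F_t(n/(k+1))$ via the distribution of primes in short intervals (an estimate of the type $\Psi(x+h)-\Psi(x)=h+O(h/\log x)$ for $h\ge x^{\eta}$). The author states explicitly that the proof is \emph{not elementary} for this reason. Your outline contains no analogue of this arithmetic input, and the missing $e_m$-cancellation is exactly the place where such input would have to enter. Until you can either supply that cancellation explicitly or identify the non-trivial arithmetic fact that replaces it, the argument has a genuine gap.
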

The estimate of the above theorem will allow us to prove necessary
and sufficient conditions for existence of limit of  sum
(\ref{F_a_k}).
\begin{thm}
\label{Ttauberian_dirichlet} Suppose $a_m$ is a fixed sequence of
complex numbers. Then the limit
\[
\lim_{n\to \infty}\frac{1}{n}\sum_{k\leqslant n}a_k\left[\frac{n}{k}
\right]= C
\]
exists if and only if the following two conditions are satisfied
\begin{enumerate}
\item
\label{cond_a_m}
\begin{equation*}
\sum_{k\leqslant n}a_k\left[\frac{n}{k} \right]\log k=o(n\log
n),\quad\hbox{as}\quad n\to \infty
\end{equation*}
\item
\label{cond_series}
\[
\lim_{\sigma \searrow
1}\sum_{m=1}^\infty\frac{a_m}{m^\sigma}=C
\]
\end{enumerate}
Note that if condition \ref{cond_a_m} is satisfied then  the
infinite series in the formulation of condition \ref{cond_series}
converges for all $\sigma>1$.
\end{thm}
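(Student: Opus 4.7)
The sufficiency direction follows immediately from Theorem~\ref{first_thm}: under condition~1,
\[\frac{1}{n}\sum_{k\leqslant n}a_k\bigl[\tfrac{n}{k}\bigr] = g\bigl(1+\tfrac{1}{\log n}\bigr) + o(1),\]
and condition~2 makes the right-hand side tend to $C$ since $1 + 1/\log n \searrow 1$.

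For necessity, set $T(n) := \sum_{k\leqslant n}a_k[n/k]$, assume $T(n) = Cn + o(n)$, and note that $T(n) = \sum_{m\leqslant n}f(m)$ for $f = a*1$. The main effort lies in establishing condition~1. Using $\log k = \log n - \log(n/k)$,
\[S(n) = T(n)\log n - R(n), \qquad R(n) := \sum_{k\leqslant n}a_k[n/k]\log(n/k),\]
and expanding $[n/k]\log(n/k) = \sum_{j\leqslant n/k}(\log(n/(kj)) + \log j)$ followed by the substitution $m = kj$,
\[R(n) = \sum_{m\leqslant n}f(m)\log(n/m) + W(n), \qquad W(n) := \sum_{k\leqslant n}a_k\sum_{j\leqslant n/k}\log j.\]
Abel summation converts the first sum into $\sum_{m<n}T(m)\log(1+1/m) = Cn + o(n)$. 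For $W(n)$, the decisive step is the elementary identity $\log = \Lambda * 1$, which gives $a*\log = f*\Lambda$, and hence
\[W(n) = \sum_{d\leqslant n}\Lambda(d)\,T(n/d).\]
Plugging in $T(n/d) = C(n/d) + r(n/d)$ with $r(x) = o(x)$, Mertens' estimate $\sum_{d\leqslant n}\Lambda(d)/d = \log n + O(1)$ produces a main term $Cn\log n + O(n)$. The error $\sum_{d\leqslant n}\Lambda(d)r(n/d)$ is handled by splitting at $d = n/X_\varepsilon$: for $d \leqslant n/X_\varepsilon$ one has $|r(n/d)| \leqslant \varepsilon(n/d)$, contributing at most $\varepsilon n\log n$; for the remaining range one uses the boundedness of $r(x)/x$ together with $\sum_{n/X_\varepsilon < d \leqslant n}\Lambda(d)/d = O(\log X_\varepsilon)$, contributing $O_\varepsilon(n)$. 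Letting $\varepsilon \to 0$ yields $W(n) = Cn\log n + o(n\log n)$, hence $R(n) = Cn\log n + o(n\log n)$ and $S(n) = T(n)\log n - R(n) = o(n\log n)$.

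With condition~1 in hand, Lemma~\ref{convergence_of_the_series_lemma} delivers convergence of $g(\sigma)$ for $\sigma > 1$ together with the Dirichlet-series identity $\sum f(m)/m^\sigma = \zeta(\sigma) g(\sigma)$. A further Abel summation gives $\sum f(m)/m^\sigma = \sigma\int_1^\infty T(x)/x^{\sigma+1}\,dx$; splitting $T(x) = Cx + r(x)$ as above shows $(\sigma-1)\sum f(m)/m^\sigma \to C$ as $\sigma \searrow 1$, and combined with $(\sigma-1)\zeta(\sigma) \to 1$ this yields $g(\sigma) \to C$, establishing condition~2.

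The principal obstacle is the necessity of condition~1; the crucial observation is the convolution identity $a * \log = f * \Lambda$, which rewrites the otherwise intractable sum $W(n)$ as $\sum_d \Lambda(d) T(n/d)$, so that classical Mertens-type estimates combine cleanly with the Tauberian hypothesis $T(x)/x \to C$ to yield the required $o(n\log n)$ bound on $S(n)$.
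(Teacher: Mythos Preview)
Your proof is correct and follows essentially the same route as the paper: both reduce the necessity of condition~1 to the identity $S(n)=A(n)\log n-\sum_{k\leqslant n}A(k)\log(1+1/k)-\sum_{d\leqslant n}\Lambda(d)\,A(n/d)$ (you reach it via the splitting $\log k=\log n-\log(n/k)$ together with $a*\log=f*\Lambda$, the paper via the generating-function relation $\zeta g'=U'-(\zeta'/\zeta)U$, which is the same convolution identity), and then insert $A(x)=Cx+o(x)$ and Mertens' estimate. Your handling of the $o$-term in $\sum_{d}\Lambda(d)A(n/d)$ and of the Abelian step for condition~2 is in fact spelled out more carefully than in the paper.
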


The last theorem is a direct analog of the  very first Tauberian
theorem that was proved by Tauber  in 1897.
\begin{thmabc}[Tauber, \cite{tauber_1897}]
A series
\begin{equation}
\label{series_a_n} \sum_{n=0}^\infty a_n
\end{equation}
converges and its sum  is equal to $A$, if and only if
\begin{equation}
\label{tauber's_sum} \sum_{k=0}^n ka_k=o(n)
\end{equation}
and  exists the limit
\[
\lim_{x\nearrow 1} \sum_{n=0}^\infty a_nx^n=A.
\]
\end{thmabc}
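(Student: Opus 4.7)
The plan is to treat the two implications separately. For necessity, assume $\sum_{n\geq 0} a_n = A$. Abel's classical theorem immediately yields $\lim_{x\nearrow 1}\sum a_n x^n = A$, which is exactly the second condition. For the Tauber condition $\sum_{k=0}^n k a_k = o(n)$, I would invoke Kronecker's lemma with weights $\lambda_k = k$: since $\sum a_k$ converges and $\lambda_k \nearrow \infty$, one has $\lambda_n^{-1}\sum_{k\leq n}\lambda_k a_k \to 0$, which is precisely the required estimate.

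The sufficiency direction is the genuinely Tauberian content. Let $s_n = \sum_{k=0}^n a_k$ and $f(x) = \sum_{k=0}^\infty a_k x^k$; I want to prove $s_n \to A$. The starting identity is
\[
s_n - f(x) = \sum_{k=0}^n a_k(1-x^k) - \sum_{k=n+1}^\infty a_k x^k,
\]
and I would specialize to $x = x_n = 1 - 1/n$, so that $f(x_n) \to A$ by the Abel-summability hypothesis. It then suffices to show that both right-hand sums tend to zero as $n\to\infty$.

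The central device is to introduce the Tauber partial sums $T_k = \sum_{j=1}^k j a_j$ (so $T_k = o(k)$) and, via $a_k = (T_k - T_{k-1})/k$, perform Abel summation inside each piece. The finite sum becomes an expression in the $T_k$ multiplied by differences of the slowly varying factors $(1-x_n^k)/k$, uniformly controlled by $1-x_n = 1/n$ for $k\leq n$; splitting the index range at a large cutoff $K$ beyond which $|T_k|\leq \varepsilon k$ then yields an $O(\varepsilon)$ bound. Similarly the tail transforms into $\sum_{k>n} T_k \Delta_k$ with $\Delta_k = x_n^k/k - x_n^{k+1}/(k+1)$, and the $o(k)$ hypothesis on $T_k$ together with a direct bound on $\sum_{k>n} k|\Delta_k|$ produces the required $o(1)$ estimate.

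The main obstacle is that the Tauber condition is posed in Cesàro form, without absolute values on the $a_k$, so a naive triangle-inequality bound of the type $\sum k|a_k|$ is not available. The Abel-summation step above is precisely what circumvents this difficulty: it transfers the control from the individual signed coefficients $ka_k$ to their partial sums $T_k$, where the averaged hypothesis $T_k = o(k)$ can be exploited directly after one splits the summation range at a sufficiently large fixed index.
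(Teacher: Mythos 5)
Your proposal is correct: Abel's theorem together with Kronecker's lemma handles necessity, and your Abel-summation argument transferring the estimates from the signed coefficients $ka_k$ to their partial sums $T_k=\sum_{j\leqslant k}ja_j$, with the evaluation point $x_n=1-1/n$ and a cutoff $K$ beyond which $|T_k|\leqslant \varepsilon k$, is the standard and complete route to sufficiency. Note that the paper itself offers no proof of this statement (it is quoted as classical background from Tauber's 1897 paper), so there is nothing internal to compare against; your argument moreover substantiates the paper's follow-up remark, since your sufficiency step shows $\sum_{k=0}^n a_k=f(x_n)+o(1)$ using only the Tauber condition, the Abel-summability hypothesis entering only in the final passage to the limit $A$.
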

It can be  shown that Tauber's condition (\ref{tauber's_sum})
imposed on coefficients $a_j$ of the formal series
(\ref{series_a_n}) alone is enough to provide an asymptotic estimate
for partial sums
\[
\sum_{k=0}^n a_k=\phi(e^{-1/n})+o(1),
\]
where $\phi(z)=\sum_{j=0}^\infty a_jz^j$. 
Which is similar to the asymptotic given in the formulation of Theorem \ref{first_thm}.

It is but natural to ask how really useful are the stated theorems
for analyzing the mean values of concrete arithmetical functions?
Condition $\lim_{x \downarrow 1}g(x)=C$ does not cause any problem
if say the Dirichlet series $g(s)$ has a closed form expression
which allows us to obtain information on behavior of $g(x)$ for real
values of $x>1$ which are close to $1$. At a first glance the
condition $S(n)=o(n\log n)$ looks quite artificial and not much
easier to check than to prove that
$A(n)=\sum_{k=1}^na_k\left[\frac{n}{k} \right]=Cn+o(n)$, since
$S(n)$ is obtained just by replacing $a_k$ by  $a_k\log k$ in the
expression of $A(n)$. However, this condition is quite natural for a
wide class of sequences $a_m$ such that $f(m)$ defined as
$f(m)=\sum_{d|m}a_d$ is a completely multiplicative function of $m$,
that is a function satisfying equation
\begin{equation}
\label{multiplicativeness} f(mn)=f(m)f(n),
\end{equation}
for any $m,n\in\mathbb{N}$. It is easy to check that if a completely
multiplicative function $f$ is bounded $|f(m)|\leqslant 1$, then the
condition $S(n)=o(n\log n)$ will be satisfied if
\begin{equation}
\label{cond_1} \sum_{p\leqslant n}\frac{|f(p)-1|}{p}\log p=o(\log
n),
\end{equation}
or
\begin{equation}
\label{cond_2} \sum_{m\leqslant n} \left| \sum_{p\leqslant
n/m}f(p)\log p -\frac{n}{m}\right| =o(n\log n),
\end{equation}
here and further we will follow the tradition to denote by
$\sum_{p}$ and $\prod_{p}$ the sums and products over prime numbers
$p$. This allows us to deduce a few classical results for the mean
values of multiplicative functions. For example, it can be shown
that if any of the above two conditions (\ref{cond_1}) or
(\ref{cond_2}) is satisfied for a fixed multiplicative function $f$
such that $|f(m)|\leqslant 1$, then  Theorem \ref{first_thm} implies
an estimate
\[
\frac{1}{n}\sum_{m=1}^n
f(m)=\prod_{p}\frac{1-\frac{1}{p^{1+{1}/{\log n}}}
}{1-\frac{f(p)}{p^{1+{1}/{\log n}}} }+o(1),
\]
as $n\to \infty$.

 Results with similar or even stronger error terms
than in the inequality of the next theorem can be proven by the
method of Hal\'{a}sz (see e.g. Chapter 19 of monograph
\cite{elliott_vol_2} and papers
\cite{halasz_1968},\cite{manstavicius_1979},\cite{maciulis_1988} and
\cite{levin_fainleib_1970}). We present its  proof bellow just to
demonstrate the connection between the Ingham summation method and
the mean values of multiplicative functions. Its proof is an easy
consequence of the the same estimates that enable us to prove
Theorem \ref{Ttauberian_dirichlet}.
\begin{thm}
\label{T_multiplicative} Suppose $f(m)$ -- completely multiplicative
function such that
$|f(m)|\leqslant 1$ then 
\[
\left|\frac{1}{n}\sum_{m=1}^nf(m)-\prod_{p\leqslant
n}\frac{1-\frac{1}{p}}{1-\frac{f(p)}{p}}\right|\leqslant
R(\alpha)\mu_n(\alpha),
\]
for any $\alpha>1$ , with $R(\alpha)$ -- a positive constant, which
depends on $\alpha$ only, and
\[
\mu_n(\alpha)=\left(\frac{1}{\log n}\sum_{p\leqslant
n}\frac{\bigl|f(p)-1\bigr|^\alpha}{p}\log p\right)^{1/\alpha}.
\]
\end{thm}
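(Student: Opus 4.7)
My plan is to apply the quantitative machinery that proves Theorem \ref{first_thm} to the sequence $a_k=(\mu\ast f)(k)$, and then bound the resulting error terms by $\mu_n(\alpha)$ via H\"older's inequality.

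Since $f$ is completely multiplicative, a short induction shows $a_k$ is multiplicative with $a_{p^r}=f(p)^{r-1}(f(p)-1)$, so the Dirichlet series in (\ref{def_g}) has the Euler product
\[
g(\sigma)=\prod_p\frac{1-p^{-\sigma}}{1-f(p)p^{-\sigma}},
\]
and $|g(\sigma)|\leqslant 1$ for every $\sigma>1$ because $|1-f(p)p^{-\sigma}|\geqslant 1-p^{-\sigma}$. Writing $\sigma_n=1+1/\log n$, the proof of Theorem \ref{first_thm} in fact produces an explicit inequality bounding $\bigl|n^{-1}\sum_{k\leqslant n}a_k[n/k]-g(\sigma_n)\bigr|$ by a multiple of $|S(n)|/(n\log n)$, and a comparison of $g(\sigma_n)$ with the truncated Euler product $\prod_{p\leqslant n}(1-p^{-1})/(1-f(p)p^{-1})$ costs an additional error of the same type. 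The problem thus reduces to controlling two weighted prime sums, each of the form $\sum_p|f(p)-1|\cdot(\text{weight})/p^{\sigma_n}$, by $\mu_n(\alpha)$.

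The key identity is the logarithmic derivative of the Euler product,
\[
-\frac{g'(\sigma)}{g(\sigma)}=\sum_p \frac{(f(p)-1)\log p}{p^{\sigma}(1-p^{-\sigma})(1-f(p)p^{-\sigma})},
\]
which, combined with $|g|\leqslant 1$ and $|1-f(p)p^{-\sigma}|\geqslant 1-p^{-\sigma}\geqslant 1/2$ for $p\geqslant 2$, reduces matters to estimating $\sum_p|f(p)-1|\log p/p^{\sigma_n}$. On $p\leqslant n$ the factor $p^{-1/\log n}$ lies in $[1/e,1]$, so this portion is bounded by a constant multiple of $\sum_{p\leqslant n}|f(p)-1|\log p/p$. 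H\"older's inequality with exponents $\alpha$ and $\alpha'=\alpha/(\alpha-1)$, applied against the weight $\log p/p$, then gives
\[
\sum_{p\leqslant n}\frac{|f(p)-1|\log p}{p}\leqslant\Bigl(\sum_{p\leqslant n}\frac{|f(p)-1|^\alpha\log p}{p}\Bigr)^{1/\alpha}\Bigl(\sum_{p\leqslant n}\frac{\log p}{p}\Bigr)^{1/\alpha'}\ll(\log n)\,\mu_n(\alpha),
\]
which after dividing by $\log n$ is exactly the shape of bound required.

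The main obstacle I foresee is the careful treatment of the tails $p>n$ that appear both in the estimate of $S(n)$ and in the comparison of $g(\sigma_n)$ with the truncated product. Each of these tails is individually of order $\log n$ via the Chebyshev-type bound $\sum_{p>n}\log p/p^{\sigma_n}\ll\log n$ and would ruin the estimate if handled in isolation. The resolution, which is essentially the bookkeeping already needed for Theorem \ref{Ttauberian_dirichlet}, is to combine the two tails inside the single quantity
\[
\frac{1}{n}\sum_{m=1}^n f(m)-\prod_{p\leqslant n}\frac{1-p^{-1}}{1-f(p)p^{-1}},
\]
so that the $O(\log n)$ contributions cancel and only an error of size $\mu_n(\alpha)$ survives; the remaining constant $R(\alpha)$ depends on $\alpha$ only through the H\"older step.
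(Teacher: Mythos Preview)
Your broad strategy---apply Theorem~\ref{Tmain} to the sequence $a_k=(\mu\ast f)(k)$, bound $S$ by H\"older with exponents $\alpha,\beta$, then compare $g(\sigma_n)$ with the truncated Euler product---is exactly the paper's. Two points, however, are not quite right, and the second one is where the real content lies.

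First, the output of Theorem~\ref{Tmain} is not ``a multiple of $|S(n)|/(n\log n)$''. The right-hand side of \eqref{main_inequality} is a weighted sum $\sum_{k=2}^{n}c_{n,k}|S(k)|$ together with a tail $\sum_{k\geqslant n}|S(k)|k^{-2-1/\log n}(\log k)^{-1}$, and what makes it usable is the moment condition \eqref{cond_c_m} on the weights. You therefore need a bound of the shape $|S(m)|\ll m(\log m)^{1/\beta}(\log n)^{1/\alpha}\mu_n(\alpha)$ valid for \emph{every} $m\geqslant 2$, not only at $m=n$; plugging this in with $\varepsilon=1/\beta$ in \eqref{cond_c_m} is what gives the $O(\mu_n(\alpha))$ error. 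Your logarithmic-derivative calculation for $g'/g$ is relevant, but it is the coefficient identity $\zeta(s)g'(s)=-U(s)\sum_m(f(m)-1)\Lambda(m)m^{-s}$ that translates it into the pointwise formula $S(m)=\sum_{k\leqslant m}(f(k)-1)\Lambda(k)\sum_{\ell\leqslant m/k}f(\ell)$, from which the bound on $|S(m)|$ actually follows.

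Second, and more importantly, your proposed handling of the primes $p>n$ by ``combining the two tails so that the $O(\log n)$ contributions cancel'' is not how the argument goes, and carrying it out that way would be genuinely delicate: the tails sit in different objects (one inside the $c_{n,k}$-weighted sum of $|S(k)|$ for $k>n$, the other inside the infinite Euler product $g(\sigma_n)$) and there is no visible mechanism for cancellation at the level of absolute-value estimates. The paper's device is much simpler: since neither $n^{-1}\sum_{m\leqslant n}f(m)$ nor $\prod_{p\leqslant n}(1-p^{-1})/(1-f(p)p^{-1})$ depends on the values $f(p)$ for $p>n$, one may \emph{redefine} $f(p)=1$ for all $p>n$ at the outset. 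This single move kills both tails simultaneously---the Euler product for $g$ now agrees with its truncation to $p\leqslant n$, and the bound $|S(m)|\leqslant m\sum_{p\leqslant m}|f(p)-1|p^{-1}\log p$ involves only primes $p\leqslant n$ for every $m$---so no cancellation is needed at all.
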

Similar result holds for general multiplicative functions, i. e.
such functions that condition (\ref{multiplicativeness}) is required
to be satisfied only for coprime pairs of natural numbers $m,n$. It
follows from our proof of Theorem \ref{T_multiplicative}  that its
modified version for general multiplicative functions will hold if
we weaken condition $|f(m)|\leqslant1$ to requirement  that
$|f(1)+f(2)+\cdots f(m)|\leqslant Dm$ for all $m\geqslant 1$, with
some fixed $D$.

Unfortunately our proof of Theorems \ref{first_thm} and \ref{Ttauberian_dirichlet}
 is not elementary since it  relies on the
estimate of the number of primes in short intervals (Theorem
\ref{number_of_primes_in_interval}) that has  originally been proved
(see e.g. \cite{karatsuba}) using a number of non-trivial facts
about distribution of zeroes of the Riemann Zeta function.

 The Tauberian theorem we prove can be reformulated in terms of
the theory of summation of divergent series. Recall that a formal
series $\sum_{m=1}^\infty c_m$ is called summable in the sense of
Ingham if there exists  a complex number $C$ such that
\[
\lim_{n\to
\infty}\sum_{m=1}^n\frac{m}{n}\left[\frac{n}{m}\right]c_m=C,
\]
in which case we write
\[
(I)\sum_{m=1}^\infty c_m=C.
\]
Suppose $0<\lambda_1<\lambda_2<\cdots\lambda_n<\cdots$ is  a
sequence of positive strictly increasing real numbers. Then we say
that a formal series $\sum_{m=0}^\infty c_m$ is $(A,\lambda_n)$
summable and its value is $C$ if
\[
\lim_{x\downarrow 0}\sum_{m=0}^\infty c_me^{-\lambda_m x}=C,
\]
in which case we write
\[
(A,\lambda_n)\sum_{m=1}^\infty c_m=C.
\]
 With these notations our tauberian theorem means that
$(I)\sum_{m=1}^\infty c_m=C$ if and only if
\[
\sum_{m=1}^n\frac{m}{n}\left[\frac{n}{m}\right]c_m\log m=o(\log
n),\quad \hbox{as}\quad n\to \infty
\]
and $(A,\log n)\sum_{m=1}^\infty c_m=C$.

The analogy between the classical Tauber's theorem and the theorem
we prove leads us to expect that a wide class of summability methods
is connected to some class of $(A, \lambda_n)$ summability methods
in such a way that a formal series $\sum_{m=0}^\infty c_m$ is
summable if and only if it is $(A, \lambda_n)$ summable and the
partial sums defining summability method with $\lambda_mc_m$ instead
of $c_m$ are $o(\lambda_n)$. We thus prove that the Ingham
summability method is connected in  this sense with $(A,\log n)$
method. It was shown in \cite{zakh_cesaro_2001} that this pattern
holds also for the Ces\`{a}ro summability methods $(C,\theta)$ with
$\theta>-1$ which are proved to be connected to $(A,n)$ method. In
the same paper we exploited the connection of Ces\`{a}ro summation
method with the multiplicative functions on permutations to obtain
 an analog of the Theorem \ref{T_multiplicative} providing the
asymptotic estimate of the mean value of the multiplicative function
on permutations.

\section{Proofs}
Let us start by introducing notations that will be used later in the
paper. We will denote by $\Psi(x)$ the Chebyshev's function
\[
\Psi(x)=\sum_{m\leqslant x}\Lambda(m),
\]
where $\Lambda(m)$ -- Mangoldt's function. We will also denote
\[
\Delta(x,y)=\Psi(y)-\Psi(x)-(y-x).
\]
Later we will need an upper bound estimate  of $\Delta(x,y)$ which
we formulate as the next theorem. In fact much stronger estimate is
known (see \cite{karatsuba}). However we formulate the weakest
estimate that we know to be sufficient for our proof of Theorem
\ref{Tmain}.
\begin{thm}[\cite{karatsuba}]
\label{number_of_primes_in_interval} Suppose $c>0$ is a fixed
constant. There exists a constant $\eta$ satisfying condition
$0<\eta<1$ such that
\[
\Delta(x,x+h)\ll \frac{h}{\log x}, \quad\hbox{when}\quad h\geqslant
c x^{\eta},
\]
for $x\geqslant 2$,  the constant in symbol $\ll$ is absolute,
depending only on $c$ and $\eta$.
\end{thm}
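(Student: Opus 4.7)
\medskip
\noindent\emph{Proof plan.} The route I would take is the classical analytic one, via the explicit formula and zero-density estimates for the Riemann zeta function. Starting from the truncated explicit formula
\[
\Psi(y)=y-\sum_{|\gamma|\leq T}\frac{y^{\rho}}{\rho}+O\!\left(\frac{y\log^{2}(yT)}{T}\right),
\]
valid uniformly for $y\geq 2$ and $T\geq 2$, where $\rho=\beta+i\gamma$ runs over the non-trivial zeros of $\zeta$, one applies this at $y=x+h$ and $y=x$ and subtracts to obtain
\[
\Delta(x,x+h)=-\sum_{|\gamma|\leq T}\frac{(x+h)^{\rho}-x^{\rho}}{\rho}+O\!\left(\frac{x\log^{2}(xT)}{T}\right).
\]
Choosing $T=x^{A}$ for a sufficiently large constant $A$ makes the tail error term $O(x^{1-A}\log^{2}x)$, which is absorbed by $h/\log x$ as soon as $h\geq cx^{\eta}$ with $\eta>1-A$. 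The whole problem is thereby reduced to bounding the sum over zeros by $O(h/\log x)$.

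For that I would use the two complementary bounds
\[
\left|\frac{(x+h)^{\rho}-x^{\rho}}{\rho}\right|=\left|\int_{x}^{x+h}u^{\rho-1}\,du\right|\leq h\,x^{\beta-1},\qquad \left|\frac{(x+h)^{\rho}-x^{\rho}}{\rho}\right|\ll\frac{x^{\beta}}{|\gamma|+1},
\]
switching from the first to the second at $|\gamma|\sim x/h$. Splitting the zeros dyadically in $|\gamma|$ and by their real part $\beta$, and inserting a zero-density estimate of Ingham type $N(\sigma,T)\ll T^{B(1-\sigma)}(\log T)^{C}$ valid for $1/2\leq\sigma\leq 1$ with some absolute constants $B$ and $C$, the dyadic contributions turn into integrals over $\sigma$ that are controlled by their endpoints. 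A classical zero-free region (either de la Vall\'ee Poussin or Vinogradov--Korobov) supplies the cutoff $\beta\leq 1-c/\log(|\gamma|+2)$ which handles the zeros closest to the line $\mathrm{Re}\,s=1$ and produces there an extra factor of $\exp(-c'\sqrt{\log x})$, more than enough.

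The main obstacle lies in this final step: one must choose $\eta$ so that $B(1-\sigma)\eta<1-\sigma$ throughout the relevant range of $\sigma$, which forces $\eta>1-1/B$. Any density estimate with $B<\infty$ therefore yields some admissible $\eta<1$; Ingham's classical bound $B=12/5$ already gives $\eta=7/12+\varepsilon$, comfortably within the required range, and Huxley's refinement does even better. Since the present paper only needs \emph{some} $\eta\in(0,1)$, even considerably weaker density estimates would suffice. Once $\eta$ is fixed the rest of the argument is a routine combination of dyadic summation and standard vertical-distribution bounds for zeros, reproducing the inequality in the form stated.
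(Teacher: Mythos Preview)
The paper does not prove this theorem at all: it is stated with a citation to Karatsuba's textbook and used as a black box in the proof of Theorem~\ref{Tmain}. There is therefore nothing in the paper to compare your argument against.

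Your outline is the standard analytic route (explicit formula plus zero-density estimates plus a zero-free region) and is correct in spirit; this is essentially what one finds in Karatsuba or any standard reference. Two small slips are worth flagging. First, the inequality you wrote, ``$B(1-\sigma)\eta<1-\sigma$'', is not the right constraint: with $T\asymp x/h=x^{1-\eta}$ the relevant condition is $(1-\eta)B(1-\sigma)<1-\sigma$, i.e.\ $(1-\eta)B<1$, which does give your conclusion $\eta>1-1/B$. Second, the density exponent $B=12/5$ (and the resulting $\eta=7/12+\varepsilon$) is Huxley's theorem, not Ingham's; Ingham's density bound yields the older $\eta=5/8+\varepsilon$. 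Neither point affects the validity of the plan, since the paper only requires \emph{some} $\eta<1$, and for that even the crudest density estimate suffices.
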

For any $t>0$ we define a positive multiplicative function
\[
f_t(m)=\sum_{d|m}\frac{\mu (d)}{d^t}=\prod_{p|m}\left(
1-\frac{1}{p^t} \right)>0,
\]
where $\mu(d)$ is M\"{o}bius function. The Dirichlet generating
series of $f_t(m)$ is
\begin{equation}
\label{gen_f_for_f_t}
L_t(s)=\sum_{m=1}^\infty \frac{f_t(m)}{m^s}
=\sum_{m=1}^\infty\frac{1}{m^s}\sum_{m=1}^\infty\frac{\mu
(d)}{d^{s+t}}= \frac{\zeta(s)}{\zeta(s+t)},
\end{equation}
where $\zeta(s)=\sum_{m=1}^\infty m^{-s}$ is the Riemann Zeta
function. We will denote the partial sums of $f_t(m)$ as
\[
F_t(x)=\sum_{1\leqslant m\leqslant x}f_t(m),\quad \mbox{for} \quad
x\geqslant 1.
\]
Later we will need the estimates of the various sums involving
\(f_t(m)\), which we formulate as the following lemma.
\begin{lem}
\label{Lestimf_m} For any $x>1$ and $t>0$ we have
\begin{equation}
\label{Lestimf_m_1} \sum_{m\leqslant x}\frac{f_t(m)}{m}\ll 1+t\log
x,
\end{equation}
\begin{equation}
\label{partial_sums_f_t} F_t(x)=\frac{x}{\zeta(1+t)}+O(x^{1-t})+
O\left(\sum_{d\leqslant x}\frac{1}{d^t}\right),
\end{equation}
\begin{equation}
\label{Lestimf_m_2} \sum_{m\leqslant
x}\frac{f_t(m)}{m}=\sum_{d\leqslant x}\frac{\mu(d)}{d^{1+t}}\log
\frac{x}{d}+O(1),
\end{equation}
\begin{equation}
\label{Lestimf_m_3} F_t(x)-F_t\left(\frac{x}{2} \right)\ll
x\left(\frac{1}{\log x}+t\right),
\end{equation}
for $k\geqslant 2$ we have
\begin{equation}
\label{Lestimf_m_4} \int_0^\infty F_t( x)\left( \frac{1
}{k^t}-\frac{1 }{(k+1)^t}\right) dt= x\int_0^\infty \left( \frac{1
}{k^t}-\frac{1
}{(k+1)^t}\right)\frac{dt}{\zeta(1+t)}+O\left(\frac{x}{k\log^2xk}\right).
\end{equation}

\end{lem}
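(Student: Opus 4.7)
My plan is to derive each estimate from the Dirichlet identity $f_t(m)=\sum_{d|m}\mu(d)/d^t$ by reversing the order of summation and controlling tails. The analytic inputs I will use throughout are $\sum_{d=1}^\infty \mu(d)/d^{1+t}=1/\zeta(1+t)$, the crude tail bound $|\sum_{d>x}\mu(d)/d^{1+t}|\ll x^{-t}$, the estimate $1/\zeta(1+t)\ll t/(1+t)$, and the fact that $\sum_{d=1}^\infty \mu(d)\log d/d^{1+t}=\zeta'(1+t)/\zeta(1+t)^2$ stays bounded as $t\downarrow 0$ (the double pole of $\zeta'$ at $1$ cancels the double pole of $1/\zeta^2$).

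I would start with (\ref{partial_sums_f_t}): write $F_t(x)=\sum_{d\leqslant x}(\mu(d)/d^t)\lfloor x/d\rfloor$, split the floor as $x/d-\{x/d\}$, complete the Möbius tail via the bound above, and collect the fractional contribution as $O(\sum_{d\leqslant x}d^{-t})$. For (\ref{Lestimf_m_2}), the same order reversal combined with $\sum_{k\leqslant y}1/k=\log y+\gamma+O(1/y)$ gives the displayed identity; the Euler constant term contributes $\gamma/\zeta(1+t)=O(1)$, and the $O(d/x)$ remainder is $O(1)$. Then (\ref{Lestimf_m_1}) follows from (\ref{Lestimf_m_2}) by splitting $\log(x/d)=\log x-\log d$: the first piece yields $(\log x)/\zeta(1+t)+O(x^{-t}\log x)=O(1+t\log x)$, and the second piece is absorbed using the boundedness of $\sum_{d\leqslant x}\mu(d)\log d/d^{1+t}$.

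For (\ref{Lestimf_m_4}) I would substitute (\ref{partial_sums_f_t}) directly under the integral. The main term reproduces the displayed integral over $1/\zeta(1+t)$, and the two error contributions are handled using $k^{-t}-(k+1)^{-t}\ll t/k^{1+t}$ together with the elementary evaluation $\int_0^\infty t\,(xk)^{-t}\,dt=1/\log^2(xk)$; a short calculation then reassembles this as the claimed $O(x/(k\log^2 xk))$.

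The main obstacle is (\ref{Lestimf_m_3}). A naive use of (\ref{partial_sums_f_t}) gives the correct main term $x/(2\zeta(1+t))=O(xt)$, but its error is only $O(x^{1-t}/(1-t))$, which fails to decay like $1/\log x$ when $t$ is small. Genuine cancellation in the Möbius partial sum over a dyadic range is needed, and this is where Theorem \ref{number_of_primes_in_interval} must enter. I would either apply Perron's formula to $\zeta(s)/\zeta(s+t)$ and shift the contour inside a standard prime-number-theorem zero-free region—collecting the residue $x/\zeta(1+t)$ at $s=1$ and bounding the shifted line by the short-interval estimate on $\Delta(x,x+h)$—or argue more directly by substituting $\log n=\sum_{d|n}\Lambda(d)$ into an Abel-summation representation of $F_t(x)-F_t(x/2)$ and feeding in $\Delta(x,x+h)\ll h/\log x$. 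Either route yields $F_t(x)-F_t(x/2)\ll x/\log x+xt$, which is the stated bound.
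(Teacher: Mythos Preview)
Your handling of (\ref{partial_sums_f_t}), (\ref{Lestimf_m_2}) and (\ref{Lestimf_m_4}) matches the paper's, but there are genuine gaps at (\ref{Lestimf_m_1}) and (\ref{Lestimf_m_3}). For (\ref{Lestimf_m_1}), deducing it from (\ref{Lestimf_m_2}) does not work as written: the claim $x^{-t}\log x=O(1+t\log x)$ is false (take $t=1/\log x$, giving $e^{-1}\log x$ against $2$), and the partial sum $\sum_{d\leqslant x}\mu(d)(\log d)/d^{1+t}$ is not uniformly $O(1)$ from $\bigl|\sum_{d\leqslant y}\mu(d)/d\bigr|\leqslant 1$ alone---controlling either piece would already require a PNT-level input. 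The paper avoids all of this with a one-line Rankin trick: since $f_t(m)>0$, one has $\sum_{m\leqslant x}f_t(m)/m\leqslant e\sum_{m\geqslant 1}f_t(m)/m^{1+1/\log x}=e\,\zeta(1+\tfrac{1}{\log x})/\zeta(1+\tfrac{1}{\log x}+t)\ll 1+t\log x$.

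For (\ref{Lestimf_m_3}) you diagnose the obstruction correctly but badly overestimate the cure: Theorem \ref{number_of_primes_in_interval} is \emph{not} used anywhere in the proof of this lemma. The paper's argument is elementary. Since $f_t(m)>0$, one has $F_t(x)-F_t(x/2)\leqslant (\log\tfrac{x}{2})^{-1}\sum_{m\leqslant x}f_t(m)\log m$; the convolution identity $f_t(m)\log m=\sum_{d\ell=m}f_t(d)\Lambda(\ell)(1-\ell^{-t})$ (obtained by differentiating $\zeta(s)/\zeta(s+t)$) bounds this weighted sum by $\sum_{d\leqslant x}f_t(d)\Psi(x/d)$, and then Chebyshev's $\Psi(y)\ll y$ together with the already proved (\ref{Lestimf_m_1}) gives $\ll x(1+t\log x)$. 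Dividing by $\log(x/2)$ yields (\ref{Lestimf_m_3}). The short-interval estimate enters only later, in the proof of Theorem \ref{Tmain}.
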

\begin{proof}  The estimates of the lemma are
trivial if  $x\leqslant 3$, therefore throughout the proof we will
assume that $x>3$.  Recalling the formula for the Dirichlet
generating function (\ref{gen_f_for_f_t}) of $f_t(m)$ we obtain
\[
\sum_{m\leqslant x}\frac{f_t(m)}{m} \leqslant e \sum_{m\leqslant
x}\frac{f_t(m)}{m^{1+\frac{1}{\log x}}}\leqslant
e\frac{\zeta(1+\frac{1}{\log x})}{\zeta(1+\frac{1}{\log x}+t)}\ll
1+t\log x,
\]
since $\frac{1}{u-1}<\zeta(u)<\frac{u}{u-1}$ for any $u
>1$. This proves (\ref{Lestimf_m_1}).

To prove the next two estimates we replace $f_t(m)$ by a sum
$\sum_{d|m}\mu(d)d^{-t}$. This way we obtain
\[
\begin{split}
F_t(x)&=\sum_{m\leqslant x}f_t(m)=\sum_{m\leqslant
x}\sum_{d|m}\frac{\mu(d)}{d^t}=\sum_{d\leqslant
x}\frac{\mu(d)}{d^t}\left[\frac{x}{d}\right] 
\\
&=\frac{x}{\zeta(1+t)}+O(x^{1-t})+ O\left(\sum_{d\leqslant
x}\frac{1}{d^t}\right),
\end{split}
\]
here we estimated $\sum_{d>x}\mu(d)d^{-1-t}\ll x^{-t}$ by applying
partial summation and utilizing the well-known fact that
\begin{equation}
\label{sum_of_mu} \Bigl|\sum_{d\leqslant
m}\frac{\mu(d)}{d}\Bigr|\leqslant 1
\end{equation} for
all $m\geqslant 1$. This proves the estimate
(\ref{partial_sums_f_t}). In a similar way
\begin{equation*}
\begin{split}
\sum_{m\leqslant x}\frac{f_t(m)}{m}&=\sum_{m\leqslant
x}\frac{1}{m}\sum_{d|m}\frac{\mu(d)}{d^t} 
=\sum_{d\leqslant x}\frac{\mu(d)}{d^{1+t}}\sum_{k\leqslant
x/d}\frac{1}{k}
\\
&=\sum_{d\leqslant
x}\frac{\mu(d)}{d^{1+t}}\left(\log\frac{x}{d}-\gamma
+O\left(\frac{d}{x}\right)\right)=\sum_{d\leqslant
x}\frac{\mu(d)}{d^{1+t}}\log\frac{x}{d}+O(1),
\end{split}
\end{equation*}
here we have used the estimate (\ref{sum_of_mu}) of partial sums of
$\mu(d)/d$. The estimate (\ref{Lestimf_m_2}) is proved.

 Differentiating by $s$ the
Dirichlet series of $f_t(m)$ we get
\begin{equation*}
\begin{split}
\sum_{m=1}^\infty \frac{f_t(m)\log
m}{m^s}&=-\frac{d}{ds}\frac{\zeta(s)}{\zeta(s+t)}=-
\frac{\zeta(s)}{\zeta(s+t)}\left(\frac{\zeta'(s)}{\zeta(s)}-\frac{\zeta'(s+t)}{\zeta(s+t)}
\right)
\\
&=\sum_{m=1}^\infty \frac{f_t(m)}{m^s}\sum_{m=1}^\infty
\frac{\Lambda(m)}{m^s} \left( 1-\frac{1}{m^t} \right).
\end{split}
\end{equation*}
Equating the coefficients of $\frac{1}{m^s}$ in the above expression
and summing by $m$ such that $m\leqslant x$ we get an identity
\[\sum_{m\leqslant x}f_t(m)\log
m=\sum_{k\ell\leqslant x}f_t(k)\Lambda(\ell) \left(
1-\frac{1}{\ell^t} \right),
\]
therefore
\begin {equation*}
\begin{split}
F_t(x)-F_t\left(\frac{x}{2} \right)&\leqslant \frac{1}{\log
\frac{x}{2}}\sum_{m\leqslant x}f_t(m)\log m\leqslant \frac{1}{\log
\frac{x}{2}}\sum_{d\ell\leqslant x}f_t(d)\Lambda(\ell)\left(
1-\frac{1}{\ell^t} \right)
\\
&\leqslant \frac{1}{\log \frac{x}{2}}\sum_{d\leqslant
x}f_t(d)\Psi\left(\frac{x}{d}\right)
\ll \frac{x}{\log \frac{x}{2}}\sum_{d\leqslant x}\frac{f_t(d)}{d}
\\
&\ll x\left(\frac{1}{\log x}+t\right),
\end{split}
\end{equation*}
for $x\geqslant 3$. Here we have used the fact that $\Psi(x)=O(x)$
and applied the already proven estimate (\ref{Lestimf_m_1}). This
proves (\ref{Lestimf_m_3}).

Applying the identity $f_t(m)=\sum_{d|m}\mu(d)d^{-t}$ we obtain
\begin{equation*}
\begin{split}
&\int_0^\infty F_t( x)\left( \frac{1 }{k^t}-\frac{1
}{(k+1)^t}\right) dt =\sum_{m\leqslant x} \int_0^\infty \left(
\frac{1 }{k^t}-\frac{1
}{(k+1)^t}\right)\Bigl(\sum_{d|m}\frac{\mu(d)}{d^t} \Bigr)\,dt
\\
&\quad
=\sum_{d\leqslant x}\mu(d)\left[ \frac{x}{d} \right] \int_0^\infty
\left( \frac{1 }{k^t}-\frac{1 }{(k+1)^t}\right)  \frac{dt}{d^t}
\\
&\quad=x\sum_{d\leqslant x} \frac{\mu(d)}{d} \int_0^\infty \left(
\frac{1 }{k^t}-\frac{1 }{(k+1)^t}\right)  \frac{dt}{d^t}
+O\left(\sum_{d\leqslant x}\frac{\log\left( 1+\frac{1}{k} \right)
}{\log^2dk}\right),
\end{split}
\end{equation*}
for all \(x\geqslant 1\). Using the estimate (\ref{sum_of_mu}) of
sums of $\mu(d)/d$ and applying partial summation
 we can estimate the tail of the series in the last expression as
\[
\left|\sum_{d> x} \frac{\mu(d)}{d} \int_0^\infty \left( \frac{1
}{k^t}-\frac{1 }{(k+1)^t}\right)  \frac{dt}{d^t}\right| \leqslant 2
\int_0^\infty \left( \frac{1 }{k^t}-\frac{1 }{(k+1)^t}\right)
\frac{dt}{x^t} \leqslant \frac{2}{k\log^2kx}.
\]
Evaluating the sum inside the symbol $O(\ldots)$ in the previous
estimate by means of inequality $\sum_{1\leqslant d\leqslant
x}\frac{1}{\log^2dk}\ll \frac{x}{\log^2xk}$ we complete the proof of
the estimate (\ref{Lestimf_m_4}).

\end{proof}
\begin{lem}
\label{convergence_of_the_series_lemma} Suppose sequence $a_k$ is
such that for any $v>1$
\begin{equation}
\label{S_k_bounded} \lim_{k\to \infty} \frac{|S(k)|}{k^{v}}=0,
\end{equation}
then series \(\sum_{m=1}^\infty\frac{a_m}{m^v}\) converges for all
\(v>1\).
\end{lem}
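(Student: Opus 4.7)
The plan is to extract control over the partial sums $T(N):=\sum_{m\leqslant N}a_m\log m$ from the hypothesis on $S(x)$ by M\"obius inversion, and then deduce the convergence of $\sum_{m=1}^\infty a_m/m^v$ by a standard partial summation.

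First observe that $S(x)=\sum_{m\leqslant x}b(m)$ where $b(m)=\sum_{k\mid m}a_k\log k$. M\"obius inversion then gives $a_m\log m=\sum_{d\mid m}\mu(m/d)\,b(d)$, so that
\[
T(N)=\sum_{d\leqslant N}b(d)\,M(N/d),\qquad M(y):=\sum_{j\leqslant y}\mu(j).
\]
Writing $b(d)=S(d)-S(d-1)$ (with $S(0)=0$) and applying Abel summation yields
\[
T(N)=S(N)+\sum_{d=1}^{N-1}S(d)\bigl[M(N/d)-M(N/(d+1))\bigr].
\]

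Now fix $v>1$ and choose $\varepsilon\in(0,v-1)$. Hypothesis (\ref{S_k_bounded}) provides a constant $C_\varepsilon$ with $|S(d)|\leqslant C_\varepsilon d^{1+\varepsilon}$ for all $d\geqslant 1$. The trivial bound $|M(N/d)-M(N/(d+1))|\leqslant [N/d]-[N/(d+1)]$ together with a second Abel summation---which telescopes the differences $[N/d]-[N/(d+1)]$ into the cumulative sum $N-[N/(d+1)]$ and then transfers the weight onto $(d+1)^{1+\varepsilon}-d^{1+\varepsilon}\ll_\varepsilon d^\varepsilon$ with the factor $[N/(d+1)]\leqslant N/d$---produces
\[
\sum_{d=1}^{N-1}d^{1+\varepsilon}\bigl([N/d]-[N/(d+1)]\bigr)\ll_\varepsilon N\sum_{d=1}^{N}d^{\varepsilon-1}\ll_\varepsilon N^{1+\varepsilon},
\]
whence $T(N)=O_\varepsilon(N^{1+\varepsilon})$. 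Applying partial summation once more to $\sum_{m\leqslant N}a_m/m^v=\sum_{m\leqslant N}(a_m\log m)\cdot(m^v\log m)^{-1}$ expresses the partial sum as $T(N)/(N^v\log N)$ plus an integral of $T(x)$ against a kernel of order $x^{-v-1}/\log x$; since $1+\varepsilon<v$, the boundary term tends to $0$ and the integral converges absolutely, establishing convergence of $\sum_{m=1}^\infty a_m/m^v$.

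The delicate step is the second Abel summation: the naive pointwise estimate $[N/d]-[N/(d+1)]\leqslant N/d^2+1$ is useless, because summing the ``$+1$'' contributions against $d^{1+\varepsilon}$ would give a bound of order $N^{2+\varepsilon}$; the telescoping structure must be exploited so that only $\sum d^{\varepsilon-1}$ appears in the end.
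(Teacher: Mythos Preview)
Your argument is correct and shares the paper's skeleton: recognise $S(m)-S(m-1)=\sum_{k\mid m}a_k\log k$, invert by M\"obius to recover control on $a_m\log m$, and then strip off the logarithm to reach $\sum a_m/m^v$. The difference is purely in execution. The paper works at the level of Dirichlet series---it observes that $\sum_m (S(m)-S(m-1))m^{-s}$ converges for $s>1$, multiplies by the absolutely convergent series $1/\zeta(s)$ to get convergence of $\sum_m a_m(\log m)m^{-s}$, and then integrates in $s$---while you work at the level of partial sums, bounding $T(N)$ directly and then Abel-summing against $(m^v\log m)^{-1}$. Your route is a little more elementary in that it does not quote the ``convergent $\times$ absolutely convergent'' theorem for Dirichlet series or the integration-in-$s$ device; the paper's route is shorter once those facts are available. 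One remark on your ``second Abel summation'': as written it silently relies on the cancellation of two boundary contributions of size $N^{2+\varepsilon}$; a cleaner way to see the same bound is the identity
\[
\sum_{d\geqslant 1}d^{1+\varepsilon}\Bigl(\Bigl[\tfrac{N}{d}\Bigr]-\Bigl[\tfrac{N}{d+1}\Bigr]\Bigr)=\sum_{m=1}^{N}\Bigl[\tfrac{N}{m}\Bigr]^{1+\varepsilon}\leqslant N^{1+\varepsilon}\sum_{m\geqslant 1}m^{-1-\varepsilon},
\]
obtained by noting that $[N/d]-[N/(d+1)]$ counts the integers $m$ with $[N/m]=d$.
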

\begin{proof} Condition of the lemma implies by summation by parts that Dirichlet series
\begin{equation}
\label{difference_of_s_m} \sum_{m=1}^\infty\frac{S(m)-S(m-1)}{m^s}
\end{equation}
converges for all $s>1$. Recalling the definition (\ref{define_S})
of $S(m)$ we can express the difference $S(m)-S(m-1)$ as a sum of
$a_k\log k$ in the following way
$$
S(m)-S(m-1)=\sum_{k|m}a_k\log k,\quad \mbox{for}\quad m\geqslant 1.
$$

 This means that if we multiply our convergent series (\ref{difference_of_s_m}) by an
absolutely convergent series
$\sum_{m=1}^\infty\frac{\mu(m)}{m^s}=1/\zeta(s)$ then the resulting
series
\[
\sum_{m=1}^\infty\frac{a_m\log m}{m^s}
\]
is also convergent for all $s>1$. This in its turn implies that if
we integrate the above series with respect to $s$,  then the
resulting the series
\[
\sum_{m=1}^\infty\frac{a_m}{m^s}
\]
is also convergent for all $s>1$.
\end{proof}
\begin{lem} Suppose sequence $a_k$ is such that
\begin{equation}
\label{S_k_bounded} \lim_{k\to \infty} \frac{|S(k)|}{k^{v}}=0,
\end{equation}
for any $v>1$, then by Lemma \ref{convergence_of_the_series_lemma}
the function $g(s)=\sum_{m=1}^\infty \frac{a_m}{m^s}$ will be
correctly defined for all $s>1$ and the identity
\begin{equation}
\label{formula_for_difference}
\begin{split}
&\sum_{m=1}^na_m\left[\frac{n}{m} \right] -ng\left(1+\frac{1}{\log
n} \right)-\frac{S(n)}{\log n}
\\
&\quad=\sum_{k=2}^{n-1}S(k) \int_0^\infty\left( \frac{F_t\left(
\frac{n}{k}\right) }{k^t}-\frac{F_t\bigl( \frac{n}{k+1}\bigr)
}{(k+1)^t}\right) dt -n\sum_{k=2}^\infty
S(k)\int_{\sigma}^\infty\left(  \frac{1}{k^u} -\frac{1}{(k+1)^u}
\right) \frac{du}{\zeta (u)},
\end{split}
\end{equation}
holds for all $n\geqslant 2$.  Here we assume that $\sum_{k=2}^1(\ldots)=0$.
\end{lem}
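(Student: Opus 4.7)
My plan is to write both $A(n):=\sum_{m=1}^n a_m[n/m]$ and $n g(\sigma)$ with $\sigma=1+1/\log n$ as linear combinations of the values $S(k)$, and then subtract. The two driving identities are the M\"obius inversion $a_m\log m=\sum_{d\mid m}\mu(d)(S(m/d)-S(m/d-1))$ (applied to the relation $S(k)-S(k-1)=\sum_{d\mid k}a_d\log d$) and the elementary formula $1/\log m=\int_0^\infty m^{-t}\,dt$, valid for every integer $m\geqslant 2$.

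For the first piece I multiply the M\"obius inversion by $[n/m]/m^t$, sum over $m\leqslant n$ and change variables $m=dk$; the resulting inner sum $\sum_{d\leqslant n/k}(\mu(d)/d^t)[n/(kd)]$ collapses to $F_t(n/k)$ by the identity already used in the proof of Lemma \ref{Lestimf_m}, yielding
\[
\sum_{m=1}^n \frac{a_m\log m}{m^t}\Bigl[\frac{n}{m}\Bigr]=\sum_{k=2}^n \frac{S(k)-S(k-1)}{k^t}\,F_t\Bigl(\frac{n}{k}\Bigr).
\]
Integrating over $t\in(0,\infty)$ turns the left side into $\sum_{m=2}^n a_m[n/m]$ via the integral representation of $1/\log m$ (the $m=1$ term drops out because $\log 1=0$). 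An Abel summation on the right, using $S(1)=0$ and the boundary evaluation $\int_0^\infty F_t(1)n^{-t}\,dt=1/\log n$ (since $F_t(1)=f_t(1)=1$), and then restoring the $m=1$ term $a_1[n/1]=na_1$, produces the intermediate identity
\[
\sum_{m=1}^n a_m\Bigl[\frac{n}{m}\Bigr]=na_1+\frac{S(n)}{\log n}+\sum_{k=2}^{n-1} S(k)\int_0^\infty\Bigl(\frac{F_t(n/k)}{k^t}-\frac{F_t(n/(k+1))}{(k+1)^t}\Bigr)dt.
\]

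For the second piece I start from $g(\sigma)=a_1+\int_\sigma^\infty(-g'(u))\,du$, which holds since $a_m/m^s\to 0$ as $s\to\infty$ for $m\geqslant 2$. The Dirichlet-convolution identity $\zeta(u)(-g'(u))=\sum_{k\geqslant 1}(S(k)-S(k-1))/k^u$ (immediate from $S(k)-S(k-1)=\sum_{d\mid k}a_d\log d$), combined with a discrete Abel summation whose boundary term $S(N)/(N+1)^u$ vanishes by hypothesis (\ref{S_k_bounded}), rewrites $-g'(u)$ as $\zeta(u)^{-1}\sum_{k\geqslant 2}S(k)(k^{-u}-(k+1)^{-u})$. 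Integrating over $u\in[\sigma,\infty)$, interchanging sum and integral, and multiplying by $n$ gives
\[
ng(\sigma)=na_1+n\sum_{k=2}^\infty S(k)\int_\sigma^\infty\Bigl(\frac{1}{k^u}-\frac{1}{(k+1)^u}\Bigr)\frac{du}{\zeta(u)},
\]
and subtracting this from the intermediate identity cancels the $na_1$ terms and produces precisely (\ref{formula_for_difference}).

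The main technical obstacle is justifying the interchange of sum and integral and the vanishing of the discrete Abel boundary term in the derivation of the second piece; both follow by choosing $v$ strictly between $1$ and $\sigma$ and combining $|S(k)|\ll k^v$ with $|k^{-u}-(k+1)^{-u}|\leqslant u\,k^{-u-1}$, which makes the double series absolutely convergent uniformly in $u\geqslant\sigma$. The manipulations of the first piece all take place within finite sums and cause no convergence difficulty.
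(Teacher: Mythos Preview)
Your argument is correct and uses exactly the same three ingredients as the paper's proof: the M\"obius inversion $a_m\log m=\sum_{d\mid m}\mu(d)\bigl(S(m/d)-S(m/d-1)\bigr)$, the integral representation $1/\log m=\int_0^\infty m^{-t}\,dt$ (which is what produces $F_t$), and a final Abel summation in $k$. The difference is purely organizational. The paper inserts the M\"obius inversion into the \emph{difference} $\sum_{m\leqslant n}a_m[n/m]-ng(\sigma)$ at once, obtaining two double sums in which the order of summation must be interchanged; the infinite one requires a fairly delicate justification (the passage around (\ref{change_of_summation}) in the paper), bounding $\alpha_k-\alpha_{k+1}$ and summing by parts. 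You instead handle $A(n)$ and $ng(\sigma)$ separately: the first piece lives entirely in finite sums, so no convergence issue arises, and for the second piece you reach the same integral $\int_\sigma^\infty(\cdot)\,du/\zeta(u)$ by writing $g(\sigma)-a_1=\int_\sigma^\infty(-g'(u))\,du$ and invoking the convolution identity $\zeta(u)(-g'(u))=\sum_k(S(k)-S(k-1))k^{-u}$. This packaging is a little cleaner than the paper's, since the only interchange left is a Fubini--Tonelli step that your bound $|S(k)|\ll k^v$, $1<v<\sigma$, together with $k^{-u}-(k+1)^{-u}\leqslant u\,k^{-u-1}$ handles directly.

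Two small points worth tightening. First, the assertion $g(u)\to a_1$ as $u\to\infty$ needs more than the termwise observation that $a_m/m^u\to0$; it follows, for instance, from convergence of $\sum_{m\geqslant2}a_m/m^{s_0}$ at any fixed $s_0>1$ via Abel summation, which gives $\bigl|\sum_{m\geqslant2}a_m/m^u\bigr|\ll 2^{s_0-u}$. Second, ``absolutely convergent uniformly in $u\geqslant\sigma$'' is not quite the right phrase for what you need; what justifies the interchange is that the iterated integral of absolute values $\int_\sigma^\infty\sum_{k\geqslant2}|S(k)|(k^{-u}-(k+1)^{-u})\zeta(u)^{-1}\,du$ is finite, which your estimates do establish once one also uses the decay in $u$ coming from $k^{-u}\leqslant 2^{-u}k^{-\sigma}\cdot 2^\sigma$ (or the equivalent).
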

\begin{proof}
The M\"{o}bius inversion formula yields
$$
a_m\log m=\sum_{k|m}\mu \left(\frac{m}{k}\right)
\bigl(S(k)-S(k-1)\bigr) \quad \mbox{when}\quad m\geqslant 1.
$$

 Inserting  the above expression for $a_k$ into the righthand
side of the identity (\ref{formula_for_difference}) in the statement
of our theorem,
 denoting
\[
\sigma=1+\frac{1}{\log n}
\]
and taking into account that $S(1)=S(0)=0$ we obtain
\begin{equation*}
\begin{split}
\sum_{m=1}^na_k\left[\frac{n}{m} \right] -ng(\sigma)&=
\sum_{m=2}^na_k\left[\frac{n}{m} \right]
-n\sum_{m=2}^\infty\frac{a_m}{m^\sigma}
\\
&=\sum_{m=2}^n\left[\frac{n}{m} \right]\frac{1}{\log m}\sum_{k|m}\mu
\left(\frac{m}{k}\right) \bigl(S(k)-S(k-1)\bigr)
\\
&\quad-n\sum_{m=2}^\infty \frac{1}{m^\sigma \log m}\sum_{k|m}\mu
\left(\frac{m}{k}\right) \bigl(S(k)-S(k-1)\bigr)
\\
\end{split}
\end{equation*}
Changing the order of summation  of the two sums occurring in last expression
we obtain
\begin{equation}
\label{Eq_diff}
\begin{split}
\sum_{m=1}^na_k\left[\frac{n}{m} \right] -ng(\sigma)&=\sum_{k=2}^n\bigl( S(k)-S(k-1) \bigr) \sum_{\scriptstyle
m:\,1\leqslant m\leqslant n \atop \scriptstyle k|m }\left[
\frac{n}{m} \right]\frac{\mu \left(\frac{m}{k}\right)}{\log m}
\\
&\quad-n\sum_{k=2}^\infty \bigl( S(k)-S(k-1) \bigr) \sum_{ m:\, k|m
}\frac{\mu \left(\frac{m}{k}\right)}{m^\sigma \log m},
\end{split}
\end{equation}
for $n\geqslant 2$. Let us show that the condition
(\ref{S_k_bounded}) imposed upon $|S(k)|$ guarantees that the
exchanging of the order of summation is justified. Indeed, Lemma
\ref{convergence_of_the_series_lemma} guarantees the convergence of
the series $\sum_{m=1}^\infty \frac{a_m}{m^\sigma}$, which means
that
\[
\sum_{m=2}^\infty \frac{a_m}{m^\sigma }
=\lim_{N\to \infty} \sum_{m=2}^N \frac{1}{m^\sigma \log
m}\sum_{k|m}\mu \left(\frac{m}{k}\right) \bigl(S(k)-S(k-1)\bigr)
\]
For any finite $N$ we can exchange the order of summation in the
 expression under the limit sign and fixing an integer $M\geqslant 3$ we obtain
\begin{equation}
\label{change_of_summation}
\begin{split}
\sum_{m=2}^\infty \frac{a_m}{m^\sigma }
&=\lim_{N\to \infty} \sum_{k=2}^N \frac{S(k)-S(k-1)}{k^\sigma}
\sum_{\ell\leqslant N/k} \frac{\mu(\ell)}{\ell^\sigma \log (k\ell)}
\\
&= \sum_{k=2}^{M-1} \frac{S(k)-S(k-1)}{k^\sigma} \sum_{\ell=1}^\infty
\frac{\mu(\ell)}{\ell^\sigma \log (k\ell)}
\\&\quad+\lim_{N\to \infty} \sum_{k=M}^N \frac{S(k)-S(k-1)}{k^\sigma}
\sum_{\ell\leqslant N/k} \frac{\mu(\ell)}{\ell^\sigma \log (k\ell)}
\\
&= \sum_{k=2}^{M-1} \frac{S(k)-S(k-1)}{k^\sigma} \sum_{\ell=1}^\infty
\frac{\mu(\ell)}{\ell^\sigma \log
(k\ell)}+O\left(\frac{1}{M^{\sigma-\sigma'}}\right),
\end{split}
\end{equation}
where $\sigma'$ is a fixed number such that $1<\sigma'<\sigma$.
Indeed
\[
\sum_{k=M}^N \bigl(S(k)-S(k-1)\bigr)\frac{1}{k^{\sigma}}
\sum_{\ell\leqslant N/k} \frac{\mu(\ell)}{\ell^\sigma \log
(k\ell)}=\sum_{k=M}^N \bigl(S(k)-S(k-1)\bigr)\alpha_k
\]
with $\alpha_k=\frac{1}{k^{\sigma}} \sum_{\ell\leqslant N/k}
\frac{\mu(\ell)}{\ell^\sigma \log (k\ell)}$, which are such that
$\alpha_k\ll 1/k^{\sigma}$ and
\[
|\alpha_k-\alpha_{k+1}|\ll
\frac{1}{k^{\sigma+1}}+\frac{1}{N^\sigma}\left(\left[\frac{N}{k}\right]-\left[\frac{N}{k+1}\right]\right).
\]
By condition of our lemma $S(n)\ll n^{\sigma'}$. This by means of
summation by parts and applying the above upper bound for
$|\alpha_k-\alpha_{k-1}|$ leads to estimate
\[
\begin{split}
\sum_{k=M}^N \bigl(S(k)-S(k-1)\bigr)\alpha_k &\ll
\frac{|S(M-1)|}{M^{\sigma}}+\frac{|S(N)|}{N^{\sigma}}
\\
&\quad+\sum_{k=M}^{N-1} |S(k)|\left(\frac{1}{k^{\sigma+1}}
+\frac{1}{N^\sigma}\left(\left[\frac{N}{k}\right]
-\left[\frac{N}{k+1}\right]\right)\right)
\\&\ll\frac{1}{M^{\sigma-\sigma'}}+\frac{1}{N^{\sigma-\sigma'}}
\end{split}
\]
whence we conclude that the upper limit of the above expression as
$N\to \infty$ does not exceed $O(M^{-(\sigma-\sigma')})$. This
proves (\ref{change_of_summation}). Letting $M\to \infty$ in
(\ref{change_of_summation}) we conclude that the change of summation
in (\ref{Eq_diff}) is justified.

Let us express the quantities involving $\mu(d)$ in the identity
(\ref{Eq_diff}) in terms of the function $f_t(m)$
\begin{equation*}
\begin{split}
 \sum_{\scriptstyle 1\leqslant m\leqslant n \atop \scriptstyle k|m }\left[ \frac{n}{m} \right]\frac{\mu \left(\frac{m}{k}\right)}{\log m}
 &=\sum_{1\leqslant d \leqslant \frac{n}{k}}
\left[ \frac{n}{kd} \right]\frac{\mu (d)}{\log kd}= \sum_{1\leqslant
m \leqslant \frac{n}{k}}\sum_{d|m}\frac{\mu (d)}{\log kd}
\\
&=\sum_{1\leqslant m \leqslant
\frac{n}{k}}\sum_{d|m}\int_0^\infty\frac{\mu(d)}{(dk)^t}dt
=\sum_{1\leqslant m \leqslant
\frac{n}{k}}\int_0^\infty\frac{1}{k^t}\prod_{p|m} \left(
1-\frac{1}{p^t} \right)\,dt
\\
&=\int_0^\infty\frac{F_t\left( \frac{n}{k}\right) }{k^t}\,dt.
\end{split}
\end{equation*}
In a similar fashion we obtain
$$
\sum_{  k|m }\frac{\mu \left(\frac{m}{k}\right)}{m^\sigma \log m}
=\sum_{d=1}^\infty \frac{\mu (d)}{k^\sigma d^\sigma \log
kd}=\int_{\sigma}^\infty \frac{du}{k^u\zeta (u)}.
$$
Inserting the above expressions into (\ref{Eq_diff}) and using
summation by parts in the resulting identities  we complete the
proof of the lemma.
\end{proof}

The estimate provided by the following theorem is a crucial part of
our argument that will enable us to obtain the results stated in the
introduction.
\begin{thm}
\label{Tmain}  Suppose sequence $a_k$ is such that for any $v>1$
\begin{equation}
\label{S_k_bounded} \lim_{k\to \infty} \frac{|S(k)|}{k^{v}}=0,
\end{equation}
then the function \(g(v)=\sum_{m=1}^\infty\frac{a_m}{m^v}\) is
correctly defined for all $v>1$  and for \(n\geqslant 2\) we have
\begin {equation}
\label{main_inequality}
\begin{split}
&\left|\sum_{m=1}^na_k\left[\frac{n}{m} \right] -ng\left(
1+\frac{1}{\log n} \right) 
\right|
\ll \sum_{k=2}^{n}c_{n,k}|S(k)| +\frac{n}{\log
n}\sum_{k=n}^\infty\frac{|S(k)|}{k^{2 +1/\log n}\log k}
,
\end{split}
\end{equation}
where $c_{n,k}$ are non-negative real constants that satisfy the
condition
\begin{equation}
\label{cond_c_m} \sum_{k=2}^{n-1}c_{n,k}k(\log
k)^{\varepsilon}\leqslant C(\varepsilon)n(\log n)^{\varepsilon -1},
\end{equation}
for any $0<\varepsilon \leqslant 1$, where $C(\varepsilon)>0$  is a
constant which depends on $\varepsilon$ only. Moreover
\begin{equation}
\label{small_k}
c_{n,k}=o(n), \quad \hbox{as}\quad n\to \infty
\end{equation}
for any fixed $k$.
\end{thm}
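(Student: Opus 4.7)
The plan is to begin from the identity of the preceding lemma, which can be written as
\[
\sum_{m=1}^n a_m\left[\tfrac{n}{m}\right] - n\,g(\sigma) - \frac{S(n)}{\log n}
= \sum_{k=2}^{n-1} S(k)\,I_{n,k} - n\sum_{k=2}^\infty S(k)\,T_k,
\]
where $\sigma = 1+1/\log n$, $I_{n,k} = \int_0^\infty\bigl(F_t(n/k)k^{-t} - F_t(n/(k+1))(k+1)^{-t}\bigr)dt$, and $T_k = \int_\sigma^\infty(k^{-u} - (k+1)^{-u})du/\zeta(u)$. The stray term $S(n)/\log n$ is absorbed by setting $c_{n,n}\ll 1/\log n$, which is unconstrained by (\ref{cond_c_m}) (whose sum stops at $n-1$) and by (\ref{small_k}) (which concerns \emph{fixed} $k$).

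The central idea is to isolate a telescoping principal term $M'_k := n\bigl(\phi(k)-\phi(k+1)\bigr)$, where $\phi(c) := \int_1^\infty du/\bigl(c^u\zeta(u)\bigr)$, enjoying the dual property that (i)~$M'_k - nT_k$ is small on $2\leq k\leq n-1$, and (ii)~$I_{n,k}-M'_k$ is also small. Property (i) is transparent: $M'_k - nT_k = n\int_1^\sigma(k^{-u}-(k+1)^{-u})du/\zeta(u)$, which via the universal bounds $1/\zeta(u)\leq u-1\leq 1/\log n$ and $k^{-u}-(k+1)^{-u}\leq u/k^{u+1}$ is $O\bigl(n/(k^2\log^2 n)\bigr)$. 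The far tail $-n\sum_{k\geq n}S(k)T_k$, handled analogously (splitting the integral at $u=2$ and integrating by parts), produces exactly the second summand on the right side of (\ref{main_inequality}).

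The main obstacle is (ii). Using the decomposition
\[
\tfrac{F_t(n/k)}{k^t} - \tfrac{F_t(n/(k+1))}{(k+1)^t}
= F_t\bigl(\tfrac{n}{k}\bigr)\bigl(\tfrac{1}{k^t}-\tfrac{1}{(k+1)^t}\bigr) + \tfrac{F_t(n/k)-F_t(n/(k+1))}{(k+1)^t},
\]
the first summand is dispatched by (\ref{Lestimf_m_4}), yielding $n\phi(k) - \tfrac{n(k+1)}{k}\phi(k+1) + O\bigl(n/(k^2\log^2 n)\bigr)$ after the substitution $u=1+t$ and use of the identity $\int_0^\infty dt/(c^t\zeta(1+t)) = c\phi(c)$. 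For the second summand, the principal contribution $(y-x)/\zeta(1+t)$ to $F_t(y)-F_t(x)$ (with $y-x = n/(k(k+1))$) integrates to $n\phi(k+1)/k$, \emph{exactly cancelling} the leftover $-\tfrac{n}{k}\phi(k+1)$ from the first summand. What remains is an error arising from the deviation of $F_t(y)-F_t(x)$ from its principal term, and this is where Theorem~\ref{number_of_primes_in_interval} enters: imitating the argument used to derive (\ref{Lestimf_m_3}), one writes $\sum_{x<m\leq y}f_t(m)\log m$ via the Mangoldt convolution and reduces the inner sum to $\Psi(y/d)-\Psi(x/d)$. For divisors $d$ for which the interval length $n/(dk(k+1))$ satisfies the hypothesis of Theorem~\ref{number_of_primes_in_interval}, the short-interval estimate yields $\Psi(y/d)-\Psi(x/d) = ((y-x)/d)\bigl(1 + O(1/\log n)\bigr)$, supplying the crucial $1/\log n$ saving; the residual range of $d$ is controlled by the Chebyshev bound $\Psi(x)\ll x$ together with (\ref{Lestimf_m_1}). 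Putting everything together gives $I_{n,k}-M'_k = O\bigl(n/(k^2\log^2 n)\bigr)$.

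Finally, with $c_{n,k}\ll n/(k^2\log^2 n)$ away from $k=n$, condition (\ref{cond_c_m}) follows from
\[
\sum_{k=2}^{n-1} c_{n,k}\,k(\log k)^\varepsilon
\ll \frac{n}{\log^2 n}\sum_{k=2}^{n-1}\frac{(\log k)^\varepsilon}{k}
\ll \frac{n(\log n)^{\varepsilon-1}}{\varepsilon+1},
\]
and (\ref{small_k}) is automatic since $n/(k^2\log^2 n) = o(n)$ for any fixed $k$.
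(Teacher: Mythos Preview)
Your identification of the main terms via $\phi(c)=\int_1^\infty c^{-u}/\zeta(u)\,du$ and the resulting cancellation is correct and matches the paper's route. The fatal gap is the claimed uniform bound $I_{n,k}-M'_k=O\bigl(n/(k^2\log^2 n)\bigr)$: it is simply false for $k$ close to $n$. Take $k=[n/2]$. The interval $(n/(k+1),n/k]$ then contains the single integer $2$, so $F_t(n/k)-F_t(n/(k+1))=f_t(2)=1-2^{-t}$, while your ``principal contribution'' $(y-x)/\zeta(1+t)$ has $y-x=n/(k(k+1))\asymp 1/n$. The deviation is therefore of size $\asymp 1$, and
\[
\int_0^\infty\frac{f_t(2)-(y-x)/\zeta(1+t)}{(k+1)^t}\,dt
=\frac{1}{\log(k+1)}-\frac{1}{\log 2(k+1)}+O\Bigl(\frac{1}{n\log n}\Bigr)
\asymp\frac{1}{\log^2 n},
\]
which exceeds your asserted bound $n/(k^2\log^2 n)\asymp 1/(n\log^2 n)$ by a factor of order $n$. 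The short-interval prime estimate cannot rescue this: Theorem~\ref{number_of_primes_in_interval} requires $h\geqslant cx^\eta$, and once $k$ exceeds roughly $n^{(1-\eta)/(2-\eta)}$ the gap $n/(k(k+1))$ is already shorter than $(n/(k+1))^\eta$ for $d=1$, so \emph{every} divisor lies in your ``residual range'' and only Chebyshev is available.

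The paper avoids this by abandoning any pointwise control of $c_{n,k}$ for $k>\sqrt n$. In that range $F_t(n/k)-F_t(n/(k+1))$ vanishes except at the $O(\sqrt n)$ values $k=[n/m]$ with $m\leqslant\sqrt n$, where it equals $f_t(m)$; one \emph{defines} $c_{n,k}=\int_0^\infty f_t(m)[n/m]^{-t}\,dt+n/(k^2\log^2 n)$ at those exceptional $k$ and verifies~(\ref{cond_c_m}) by reparametrising the sum in $m$ and invoking~(\ref{Lestimf_m_1}). Even for $k\leqslant\sqrt n$ the paper establishes only the weighted-sum bound~(\ref{cond_c_m}), not a pointwise estimate: the argument there requires a further split at $k=n^\alpha$ for suitably small $\alpha$, and for $k\leqslant n^\alpha$ a decomposition of the error into several pieces ($D_1,D_2,J_1,J_2$) according to whether the inner divisor meets the hypothesis of Theorem~\ref{number_of_primes_in_interval}. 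Your derivation of~(\ref{cond_c_m}) from the uniform pointwise bound thus rests on a false premise.
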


\begin{proof} Let us denote
\begin{equation}
\label{R_n} R_n=\sum_{m=1}^na_k\left[\frac{n}{m} \right]
-ng\left(1+\frac{1}{\log n} \right)-\frac{S(n)}{\log n}
\end{equation}
We will prove the theorem by estimating the quantities involved in
the right hand side of identity (\ref{formula_for_difference})
expressing $R_n$ in terms of quantities involving sums of $f_t(m)$.
Throughout the proof we will denote
$$
\sigma=1+\frac{1}{\log n}.
$$
Applying inequality \mbox{$\zeta(u)>\frac{1}{u-1}$},  which is true
for all $u>1$, we obtain
\begin{equation}
\label{int_1_sigma}
\begin{split}
\int_{1}^{\sigma}\left(  \frac{1}{k^u} -\frac{1}{(k+1)^u} \right)
\frac{du}{\zeta (u)}&<\int_1^\sigma \frac{u-1}{k^u}\left( 1-e^{-u\log \left( 1+\frac{1}{k}\right)} \right) du
\\
&<\frac{\sigma}{k^2}\int_{0}^{\sigma-1}u\,du=
\frac{\sigma}{2k^2\log^2n}.
\end{split}
\end{equation}
For $k\geqslant n$ we have
\begin{equation}
\label{int_sigma_infty}
\begin{split}
\int_{\sigma}^\infty\left(  \frac{1}{k^u} -\frac{1}{(k+1)^u} \right)
\frac{du}{\zeta (u)}
\ll
\frac{1}{k^{\sigma +1}\log n\log k}.
\end{split}
\end{equation}
Putting $x=\frac{n}{k+1}$ in (\ref{Lestimf_m_4}) we obtain
\[
 \int_0^\infty F_t\left( \frac{n}{k+1}\right)\left( \frac{1
}{k^t}-\frac{1 }{(k+1)^t}\right) dt= \frac{n}{k+1}\int_0^\infty
\left( \frac{1 }{k^t}-\frac{1
}{(k+1)^t}\right)\frac{dt}{\zeta(1+t)}+O\left(\frac{n}{k^2\log^2n}\right).
\]
Let us now use the above estimate together with (\ref{int_1_sigma})
and (\ref{int_sigma_infty}) to further simplify  the expression of
$R_n$.
\begin{multline*}
R_n=\sum_{k=2}^{n-1}S(k)\left[
\int_0^\infty\frac{ F_t\left( \frac{n}{k}\right) -F_t\bigl( \frac{n}{k+1}\bigr) }{k^t} dt  -\frac{n}{k(k+1)}\int_{0}^\infty
\frac{dt}{k^t\zeta (1+t)}        \right]
\\
+O\left(\frac{n}{\log n}\sum_{k=n}^\infty\frac{|S(k)|}{k^{\sigma
+1}\log k}+ \frac{n}{\log^2 n}\sum_{k=2}^n\frac{|S(k)|}{k^2}\right).
\end{multline*}
Suppose $\sqrt{n}\leqslant k \leqslant n-1$, then
$\frac{n}{k}-\frac{n}{k+1}=\frac{n}{k(k+1)}<1$. This means that
there can be only one natural number between $\frac{n}{k}$ and
$\frac{n}{k+1}$. In which case, if there exists
 such $m$ that $\frac{n}{k}\geqslant m > \frac{n}{k+1}$, we have
 $k\leqslant \frac{n}{m}$ and $k+1>\frac{n}{m}$. This means that
 $\left[ \frac{n}{m}\right] \geqslant k > \left[ \frac{n}{m}\right] -1$.
Which implies that $k=\left[ \frac{n}{m}\right]$. And conversely,
for $k=\left[ \frac{n}{m}\right]$, we have
 $\frac{n}{k}\geqslant m > \frac{n}{k+1}$. Thus the only natural
 numbers \(k\) in the interval \(\sqrt{n}\leqslant k \leqslant n-1\)
 such that the interval \(\left[ \frac{n}{k},\frac{n}{k+1}\right)\)
 contains some natural number \(m\) and subsequently \(F_t\left( \frac{n}{k}\right)
-F_t\bigl( \frac{n}{k+1}\bigr)=f_t(m)\) are of the form \(k=[n/m]\).
This observation allows us to further simplify the estimate of the
sum over $k>\sqrt{n}$ in the estimate of $R_n$ and obtain
\begin{equation*}
\begin{split}
|R_n| &\leqslant \sum_{2\leqslant k<\sqrt{n}}|S(k)|\left|
\int_0^\infty\frac{ F_t\left( \frac{n}{k}\right) -F_t\bigl(
\frac{n}{k+1}\bigr) }{k^t} \,dt  -\frac{n}{k(k+1)}\int_{0}^\infty
\frac{dt}{k^t\zeta (1+t)}        \right|
\\
&\quad +
\sum_{2\leqslant m\leqslant \sqrt{n}}\left|S\left( \left[\frac{n}{m} \right] \right)\right|\int_0^\infty
\frac{f_t(m)}{[n/m]^t}\,dt
+O\left(\frac{n}{\log
n}\sum_{k=n}^\infty\frac{|S(k)|}{k^{\sigma +1}\log k}+
\frac{n}{\log^2 n}\sum_{k=2}^n\frac{|S(k)|}{k^2}\right).
\end{split}
\end{equation*}
Thus the inequality (\ref{main_inequality}) holds if for $k\leqslant
\sqrt{n}$ we put
\begin{equation}
\label{define_c_n_k}
c_{n,k}=\left| \int_0^\infty\frac{ F_t\left(
\frac{n}{k}\right) -F_t\bigl( \frac{n}{k+1}\bigr) }{k^t} dt
-\frac{n}{k(k+1)}\int_{0}^\infty \frac{dt}{k^t\zeta (1+t)}
\right|+\frac{n}{k^2\log^2 n}
\end{equation}
and for $k>\sqrt{n}$ define
\begin{equation}
\label{c_nk_k>sqrt}
c_{n,k}=
\begin{cases}
\frac{n}{k^2\log^2 n},& \text{if $\sqrt{n}<k\leqslant n-1$ and
$k\not=[n/m]$ for any $m\leqslant \sqrt{n}$},
\\
\int_0^\infty \frac{f_t(m)}{[n/m]^t}dt+\frac{n}{k^2\log^2 n},&
\text{if $\sqrt{n}<k\leqslant n-1$ and $k=[n/m]$ for some
$m\leqslant \sqrt{n}$},
\\
\frac{n}{\log n}, & \text{if $k=n$}.
\end{cases}
\end{equation}

Plugging the estimate (\ref{partial_sums_f_t}) of $F_t(x)$  into our
definition  of $c_{n,k}$ in (\ref{define_c_n_k}) after some easy
calculations we conclude that for fixed $k$ we have $c_{n,k}=o(n)$.

It remains to check that thus defined $c_{n,k}$ satisfy the
condition (\ref{cond_c_m}) for any fixed $0<\varepsilon\leqslant 1$.
We will do this by splitting the sum involving $c_{n,k}$ into three
parts
\begin{equation}
\label{estimate_of_K}
\begin{split}
\sum_{2\leqslant k\leqslant n-1}c_{n,k}k(\log k)^{\varepsilon}&=
\sum_{k\leqslant n^\alpha}c_{n,k}k(\log k)^{\varepsilon}+
\sum_{n^\alpha< k< \sqrt{n}}c_{n,k}k(\log k)^{\varepsilon}+
\sum_{\sqrt{n}\leqslant k\leqslant n-1}c_{n,k}k(\log
k)^{\varepsilon}
\\
 &=:K_1+K_2+K_3.
\end{split}
\end{equation}
Here and further $0<\alpha<1/2$ will be fixed arbitrarily chosen
number, upon which we will later impose additional upper bound
conditions.

 The case of estimating $K_3$ the sum of $c_{n,k}$ over
interval $\sqrt{n}\leqslant k\leqslant n-1$ is the easiest. By our
expression (\ref{c_nk_k>sqrt}) for $c_{n,k}$ belonging to this
interval
 we have
\[
\begin{split}
K_3&=\sum_{\sqrt{n}\leqslant k\leqslant n-1}c_{n,k}k(\log
k)^{\varepsilon}
\\
&\ll \sum_{m\leqslant \sqrt{n}}
\frac{n}{m}\left(\log\frac{n}{m}\right)^\varepsilon\int_0^\infty
\frac{f_t(m)}{[n/m]^t}\,dt+\sum_{\sqrt{n}\leqslant k\leqslant
n-1}\frac{n}{k^2\log^2 n} k(\log k)^\varepsilon
\\
&\ll {n}(\log n)^\varepsilon\int_0^\infty
\frac{1}{n^{t/2}}\sum_{m\leqslant
\sqrt{n}}\frac{f_t(m)}{m}\,dt+n(\log n)^{\varepsilon-1}
\\
&\ll {n}(\log n)^\varepsilon\int_0^\infty \frac{1+t\log
n}{n^{t/2}}\,dt+n(\log n)^{\varepsilon-1}\ll n(\log
n)^{\varepsilon-1}
\end{split}
\]
Here we have used the upper bound for sum $\sum_{m\leqslant
x}\frac{f_t(m)}{m}$ provided by estimate (\ref{Lestimf_m_1}) of
Lemma \ref{Lestimf_m}.

Let us now estimate $K_2$ -- the sum over interval
$n^\alpha<m<\sqrt{n}$. We have
\begin {equation}
\label{estim_sum}
\begin{split}
K_2&=\sum_{n^{\alpha}< k < \sqrt{n}}c_{n,k}k(\log k)^{\varepsilon}
\\
 &\ll (\log n)^{\varepsilon} \sum_{n^{\alpha}< k
 <\sqrt{n}}k \int_0^\infty\frac{ F_t\left( \frac{n}{k}\right)
-F_t\bigl( \frac{n}{k+1}\bigr) }{k^t} dt
\\ &\quad+
  (\log n)^{\varepsilon} \sum_{n^{\alpha}< k
 <\sqrt{n}}\frac{n}{k}\int_{0}^\infty \frac{dt}{k^t\zeta (1+t)}
 +\sum_{n^{\alpha}< k
 <\sqrt{n}}\frac{n}{k^2\log^2 n} k(\log k)^\varepsilon
\end{split}
\end{equation}
The second and the third sum in the last estimate are clearly
$O\bigl(n(\log n)^{\varepsilon -1}\bigr)$. The first sum  in the
above upper bound can be estimated as
\begin {equation}
\begin{split}
\sum_{n^{\alpha}< k <\sqrt{n}} k \int_0^\infty\frac{ F_t\left(
\frac{n}{k}\right) -F_t\bigl( \frac{n}{k+1}\bigr) }{k^t} \,dt
&\leqslant
 \int_0^\infty\frac{1}{n^{\alpha t}}\sum_{n^{\alpha}< k <\sqrt{n}} k\left(
 F_t\Bigl( \frac{n}{k}\Bigr) -F_t\Bigl(
\frac{n}{k+1}\Bigr)\right)\, dt
\\
&\leqslant
 n\int_0^\infty\frac{1}{n^{\alpha t}}\sum_{n^{\alpha}< k <\sqrt{n}}
 \frac{k}{n}
 \sum_{\frac{n}{k+1}<m\leqslant \frac{n}{k}}f_t(m)
 \, dt
\\
&\leqslant
 n\int_0^\infty\frac{1}{n^{\alpha t}}\sum_{n^{\alpha}< k <\sqrt{n}}
 \sum_{\frac{n}{k+1}<m\leqslant \frac{n}{k}}\frac{f_t(m)}{m}
 \, dt
 \\
&\leqslant
 n\int_0^\infty\frac{1}{n^{\alpha t}}
 \sum_{m=1}^n\frac{f_t(m)}{m}
 \, dt.
\end{split}
\end{equation}
We can use the upper bound for sum $\sum_{m=1}^n\frac{f_t(m)}{m}$ as
provided in  Lemma \ref{Lestimf_m} to further estimate
\begin {equation}
\begin{split}
\sum_{n^{\alpha}< k <\sqrt{n}} k \int_0^\infty\frac{ F_t\left(
\frac{n}{k}\right) -F_t\bigl( \frac{n}{k+1}\bigr) }{k^t}
\,dt 
 &\ll n \int_0^\infty\frac{1+t\log n}{n^{\alpha t}}
 \, dt\ll \frac{n}{\log n}.
 \end{split}
\end{equation}
Inserting this estimate into (\ref{estim_sum}) we get
\begin{equation}
K_2=\sum_{n^{\alpha}< k < \sqrt{n}}c_{n,k}k(\log k)^{\varepsilon}\ll
n(\log n)^{\varepsilon-1}.
\end{equation}

The case of the $K_1$, the sum over $k$ such that $k\leqslant
n^\alpha$ is more complicated. We will prove that it is also
$O\bigl(n(\log n)^{\varepsilon-1}\bigr)$. The reason of considering
separately part $k\geqslant n^\alpha$ is that when $k\leqslant
n^\alpha$ the gap between numbers $n/k$ and $n/(k+1)$ will be large
enough to apply Theorem \ref{number_of_primes_in_interval} to
estimate the quantity $F_t(n/k)-F_t(n/(k+1))$.
 We have
\begin{equation*}
-\frac{d}{ds}L_t(s)=L_t(s)\left(-\frac{\zeta'(s)}{\zeta(s)}\right)-\frac{d}{dt}L_t(s)
\end{equation*}
which means that
\[
f_t(m)\log m=\sum_{dl=m}f_t(d)\Lambda(l)+\frac{d}{dt}f_t(m).
\]
Hence for $k\leqslant \sqrt{n}$
\begin{equation}
\label{estimF}
\begin{split}
  &F_t\Bigl( \frac{n}{k}\Bigr) -F_t\Bigl(
  \frac{n}{k+1}\Bigr)=\sum_{\frac{n}{k+1}<m\leqslant\frac{n}{k}}f_t(m)
   \\
    & =\frac{1}{\log
    \frac{n}{k}}\sum_{\frac{n}{k+1}<m\leqslant\frac{n}{k}}f_t(m)\log
    m+\frac{1}{\log
    \frac{n}{k}}\sum_{\frac{n}{k+1}<m\leqslant\frac{n}{k}}f_t(m)\log
    \frac{n}{km}
    \\
    & =\frac{1}{\log
    \frac{n}{k}}\sum_{\frac{n}{k+1}<m\leqslant\frac{n}{k}}f_t(m)\log
    m+O\left(\frac{1}{k\log
    n  }  \left(F_t\Bigl( \frac{n}{k}\Bigr) -F_t\Bigl(
  \frac{n}{k+1}\Bigr)\right)\right)
  \\
    & =\frac{1}{\log
    \frac{n}{k}}\sum_{m\leqslant\frac{n}{k}}f_t(m)
    \left(\Psi\Bigl( \frac{n}{km}  \Bigr)-\Psi\Bigl( \frac{n}{(k+1)m}  \Bigr) \right)
    \\
    &\quad+O\left(\frac{1}{\log
    n  } \frac{d}{dt} \left(F_t\Bigl( \frac{n}{k}\Bigr) -F_t\Bigl(
  \frac{n}{k+1}\Bigr)\right)\right)
    +O\left(\frac{n}{k^3\log
    n  }  \right).
\end{split}
\end{equation}
 Plugging our expression (\ref{estimF}) into our formula for $c_{n,k}$
 we have
\begin {equation}
\label{K_1}
\begin{split}
K_1
&=\sum_{2\leqslant k\leqslant n^{\alpha}}k(\log k)^\varepsilon
\left| \int_0^\infty\frac{ F_t\left( \frac{n}{k}\right) -F_t\bigl(
\frac{n}{k+1}\bigr) }{k^t} dt -\frac{n}{k(k+1)}\int_{0}^\infty
\frac{dt}{k^t\zeta (1+t)} \right|+O\bigl(n(\log n)^{\varepsilon
-1}\bigr)
\\
&\ll \sum_{k\leqslant n^\alpha} k(\log k)^{\varepsilon}
\left|\int_0^\infty
 \frac{1}{\log
    \frac{n}{k}}\sum_{m\leqslant\frac{n}{k}}f_t(m)
    \left(\Psi\Bigl( \frac{n}{km}  \Bigr)-\Psi\Bigl( \frac{n}{(k+1)m}  \Bigr)
    \right)\,\frac{dt}{k^t}\right.
    \\
    &\quad\left.-\frac{n}{k(k+1)}\int_{0}^\infty \frac{dt}{k^t\zeta (1+t)}
    \right|
\\
&\quad+\frac{1}{\log
    n  }\sum_{2\leqslant k < \sqrt{n}}k(\log k)^{\varepsilon}
\int_0^\infty\frac{1}{k^t}\frac{d}{dt}\left(F_t\Bigl(
\frac{n}{k}\Bigr) -F_t\Bigl(
  \frac{n}{k+1}\Bigr)\right)\,dt
    +O\bigl(n(\log n)^{\varepsilon -1}\bigr)
\end{split}
\end{equation}
Note that $\frac{d f_t(m)}{dt}>0$ for all $t>0$. Therefore  the sum
involving derivative   $\frac{d}{dt}\left(F_t\Bigl(
\frac{n}{k}\Bigr) -F_t\Bigl(
  \frac{n}{k+1}\Bigr)\right)>0$  can be estimated by applying partial
integration
\begin{equation}
\label{estim_with_diff}
\begin{split}
\frac{1}{\log
    n  }&\sum_{2\leqslant k < \sqrt{n}}k(\log k)^{\varepsilon}
\int_0^\infty\frac{1}{k^t}\frac{d}{dt}\left(F_t\Bigl(
\frac{n}{k}\Bigr) -F_t\Bigl(
  \frac{n}{k+1}\Bigr)\right)\,dt
  \\
  &\ll \frac{1}{\log
    n  }\sum_{2\leqslant k < \sqrt{n}}k(\log k)^{\varepsilon+1}
\int_0^\infty\left(F_t\Bigl( \frac{n}{k}\Bigr) -F_t\Bigl(
  \frac{n}{k+1}\Bigr)\right)\,\frac{dt}{k^t}
  \\
  &\ll \frac{1}{\log
    n  }\sum_{1\leqslant s\leqslant \frac{\log n}{2\log 2}}
    \sum_{2^s\leqslant k < 2^{s+1}}k(\log k)^{\varepsilon+1}
\int_0^\infty\left(F_t\Bigl( \frac{n}{k}\Bigr) -F_t\Bigl(
  \frac{n}{k+1}\Bigr)\right)\,\frac{dt}{k^t}
  \\
  &\ll \frac{1}{\log
    n  }\sum_{1\leqslant s\leqslant \frac{\log n}{2\log 2}}
    2^s s^{\varepsilon+1}
\int_0^\infty\left(F_t\Bigl( \frac{n}{2^s}\Bigr) -F_t\Bigl(
  \frac{n}{2^{s+1}}\Bigr)\right)\,\frac{dt}{2^{st}}
  \end{split}
\end{equation}
Applying now   (\ref{Lestimf_m_3}) to estimate \(F_t\bigl(
\frac{n}{2^s}\bigr) -F_t\bigl(
  \frac{n}{2^{s+1}}\bigr)\) we get
\begin{equation}
\begin{split}
\frac{1}{\log
    n  }&\sum_{2\leqslant k < \sqrt{n}}k(\log k)^{\varepsilon}
\int_0^\infty\frac{1}{k^t}\frac{d}{dt}\left(F_t\Bigl(
\frac{n}{k}\Bigr) -F_t\Bigl(
  \frac{n}{k+1}\Bigr)\right)\,dt
  \\
&\ll \frac{n}{\log
    n  }\sum_{1\leqslant s\leqslant \frac{\log n}{2\log 2}}
    s^{\varepsilon+1}
\int_0^\infty\left(t+\frac{1}{\log n}\right)\,\frac{dt}{2^{st}}
\\
    &\ll n(\log n)^{\varepsilon -1}
\end{split}
\end{equation}
Applying this estimate to continue the evaluation of $K_1$ in
(\ref{K_1}) we obtain
\begin{equation}
\begin{split}
    K_1&\ll
\sum_{k\leqslant n^\alpha} k(\log k)^{\varepsilon}
\left|\int_0^\infty
 \frac{1}{\log
    \frac{n}{k}}\sum_{m\leqslant\frac{n}{k}}f_t(m)
    \Delta\left(\frac{n}{km}  , \frac{n}{(k+1)m}
    \right)\,\frac{dt}{k^t}\right|
    \\
    &\quad+
    n\sum_{k\leqslant n^\alpha}
    \frac{(\log k)^{\varepsilon}}{(k+1)}\left|\int_{0}^\infty\biggl(\frac{1}{\zeta (1+t)}-\frac{1}{\log
    \frac{n}{k}}\sum_{m\leqslant\frac{n}{k}}\frac{f_t(m)}{m} \biggr)\frac{dt}{k^t}\right|
    +O\bigl(n(\log n)^{\varepsilon -1}\bigr).
    \end{split}
\end{equation}
The last sum in the above equation can be estimated applying
estimate (\ref{Lestimf_m_2}) of Lemma \ref{Lestimf_m} as
\[
\int_{0}^\infty\biggl(\frac{1}{\zeta (1+t)}-\frac{1}{\log
    \frac{n}{k}}\sum_{m\leqslant\frac{n}{k}}\frac{f_t(m)}{m}
    \biggr)\frac{dt}{k^t}
    \ll\frac{1}{\log n\log k}.
\]
for $k\leqslant \sqrt{n}$. This gives us
\[
K_1=\sum_{2\leqslant k\leqslant n^{\alpha}}c_{n,k}k(\log
k)^{\varepsilon} \ll D +O\bigl(n(\log n)^{\varepsilon -1}\bigr).
\]
Here
\[
D=\sum_{k\leqslant n^\alpha} k(\log k)^{\varepsilon}
\left|\int_0^\infty
 \frac{1}{\log
    \frac{n}{k}}\sum_{m\leqslant\frac{n}{k}}f_t(m)
    \Delta\left(\frac{n}{km}  , \frac{n}{(k+1)m}
    \right)\,\frac{dt}{k^t}\right|\leqslant D_1+D_2
\]
where
\[
D_1=\sum_{k\leqslant n^\alpha} k(\log k)^{\varepsilon}
\left|\int_0^\infty
 \frac{1}{\log
    \frac{n}{k}}\sum_{\frac{n}{k^{1+\delta}}\leqslant m\leqslant\frac{n}{k}}f_t(m)
    \Delta\left(\frac{n}{km}  , \frac{n}{(k+1)m}
    \right)\,\frac{dt}{k^t}\right|
\]
and
\[
D_2=\sum_{k\leqslant n^\alpha} k(\log k)^{\varepsilon} \int_0^\infty
 \frac{1}{\log
    \frac{n}{k}}\sum_{m\leqslant\frac{n}{k^{1+\delta}}}f_t(m)\left|
    \Delta\left(\frac{n}{km}  , \frac{n}{(k+1)m}
    \right)\right|\,\frac{dt}{k^t},
\]
here $\delta>0$ -- fixed, such that $\alpha(1+\delta)< 1$.

 Then
\begin{equation}
\begin{split}
    D_1&\ll
\frac{1}{\log n}\sum_{k\leqslant n^\alpha} k(\log k)^{\varepsilon}
\int_0^\infty
 \sum_{\frac{n}{k^{1+\delta}}\leqslant m\leqslant\frac{n}{k}}f_t(m)
    \left(\Psi\left(\frac{n}{km}\right)-\Psi\left( \frac{n}{(k+1)m}
    \right)\right)\,\frac{dt}{k^t}
\\
&\quad+ \frac{n}{\log n}\sum_{k\leqslant n^\alpha} \frac{(\log
k)^{\varepsilon}}{k} \int_0^\infty
 \sum_{\frac{n}{k^{1+\delta}}\leqslant
 m\leqslant\frac{n}{k}}\frac{f_t(m)}{m}
\,\frac{dt}{k^t}=:J_1+J_2.
\end{split}
\end{equation}
Changing the order of summation in $J_1$ we get
\begin{equation}
\begin{split}
J_1&=\frac{1}{\log n}\sum_{k\leqslant n^\alpha} k(\log
k)^{\varepsilon} \int_0^\infty
 \sum_{\frac{n}{k^{1+\delta}}\leqslant m\leqslant\frac{n}{k}}f_t(m)
    \left(\Psi\left(\frac{n}{km}\right)-\Psi\left( \frac{n}{(k+1)m}
    \right)\right)\,\frac{dt}{k^t}
\\
&=\frac{1}{\log n}\sum_{ n^{1-\alpha(1+\delta)}\leqslant m\leqslant
n/2} \int_0^\infty f_t(m)
 \sum_{\left(\frac{n}{m}\right)^{\frac{1}{1+\delta}}\leqslant k\leqslant\frac{n}{m}}
    k(\log k)^{\varepsilon}\left(\Psi\left(\frac{n}{km}\right)-\Psi\left( \frac{n}{(k+1)m}
    \right)\right)\,\frac{dt}{k^t}
\\
&\leqslant\frac{1}{\log n}\sum_{ n^{1-\alpha(1+\delta)}\leqslant
m\leqslant n/2}\left(\log \frac{n}{m}\right)^{\varepsilon}
\int_0^\infty f_t(m)
 \sum_{
 k\leqslant\frac{n}{m}}
    k\left(\Psi\left(\frac{n}{km}\right)-\Psi\left( \frac{n}{(k+1)m}
    \right)\right)\left(\frac{m}{n}
    \right)^{\frac{t}{1+\delta}}\,dt.
    \end{split}
\end{equation}
Since for any $x\geqslant 1$
$$
\sum_{
 k=1}^\infty
    k\left(\Psi\left(\frac{x}{k}\right)-\Psi\left( \frac{x}{k+1}
    \right)\right)=
    \sum_{
 k=1}^\infty
    \Psi\left(\frac{x}{k}\right)
    \ll x\sum_{k=1}^{[x]}\frac{1}{k} \ll x\log x
$$
we get
\begin{equation}
\label{estimate_of_J_1}
\begin{split}
J_1&\ll\frac{n}{\log n}\sum_{ n^{1-\alpha(1+\delta)}\leqslant
m\leqslant n/2}\left(\log \frac{n}{m}\right)^{\varepsilon+1}
\int_0^\infty \frac{f_t(m)}{m}\left(\frac{m}{n}
\right)^{\frac{t}{1+\delta}}
 \,dt
 \\
 &\ll\frac{n}{\log n}\sum_{ 1\leqslant s\leqslant
(1-\alpha(1+\delta))\log_2n}\sum_{ \frac{n}{2^{s+1}}< m\leqslant
\frac{n}{2^s}}\left(\log \frac{n}{m}\right)^{\varepsilon+1}
\int_0^\infty \frac{f_t(m)}{m}\left(\frac{m}{n}
\right)^{\frac{t}{1+\delta}}
 \,dt
 \\
 &\ll\frac{1}{\log n}\sum_{ 1\leqslant s\leqslant
(1-\alpha(1+\delta))\log_2n}s^{\varepsilon+1}2^s \int_0^\infty\left(
F_t\left(\frac{n}{2^s} \right)-F_t\left(\frac{n}{2^{s+1}}
\right)\right)2^{-\frac{st}{1+\delta}}
 \,dt
 \\
 &\ll\frac{n}{\log n}\sum_{ 1\leqslant s\leqslant
(1-\alpha(1+\delta))\log_2n}s^{\varepsilon+1} \int_0^\infty\left(
\frac{1}{\log n}+t\right)2^{-\frac{st}{1+\delta}}
 \,dt
 \ll n(\log n)^{\varepsilon-1}.
 \end{split}
\end{equation}
We estimate $J_2$ in a similar way as $J_1$. First changing
summation we get
\begin{equation}
\begin{split}
J_2&=\frac{n}{\log n}\sum_{ n^{1-\alpha(1+\delta)}\leqslant
m\leqslant n/2} \int_0^\infty f_t(m)
 \sum_{\left(\frac{n}{m}\right)^{\frac{1}{1+\delta}}\leqslant k\leqslant\frac{n}{m}}
    \frac{(\log k)^{\varepsilon}}{k^{1+t}}\,dt
\\&\leqslant\frac{n}{\log n}\sum_{ n^{1-\alpha(1+\delta)}\leqslant
m\leqslant n/2}\left(\log \frac{n}{m}\right)^{\varepsilon+1}
\int_0^\infty \frac{f_t(m)}{m}\left(\frac{m}{n}
\right)^{\frac{t}{1+\delta}}
 \,dt.
\end{split}
\end{equation}
The last sum has already been estimated before
 while evaluating $J_1$ in (\ref{estimate_of_J_1}), thus finally we get
\[
J_2\ll n(\log n)^{\varepsilon-1}.
\]
Our estimates of $J_1$ and $J_2$ implies that
\[
D_1\ll n(\log n)^{\varepsilon-1}.
\]
Let us now turn to estimating the sum $D_2$. Let us  chose
$\delta=1/(1-\eta)$ where $\eta$ is the same as in formulation of
Theorem \ref{number_of_primes_in_interval} then
\[
\frac{n}{mk}-\frac{n}{m(k+1)}\geqslant
\left(\frac{2}{3}\right)^{1-\eta}
\left(\frac{n}{m(k+1)}\right)^\eta,
\]
for $m\leqslant \frac{n}{k^{1+\delta}}$. Additionally let us assume
that $\alpha>0$ is small enough to ensure that $\alpha(\delta+1)<1$.
Then we can make use of Theorem \ref{number_of_primes_in_interval}
to evaluate
\[
\Delta\left(\frac{n}{km}  , \frac{n}{(k+1)m}\right) \ll
\frac{n}{k^2}\frac{1}{\log  \frac{n}{km}}.
\]
Hence we obtain
\begin{equation}
D_2\ll \frac{n}{\log
   {n}}\sum_{k\leqslant n^\alpha} \frac{(\log k)^{\varepsilon}}{k} \int_0^\infty
 \sum_{m\leqslant\frac{n}{k^{1+\delta}}}
   \frac{f_t(m)}{m\log \frac{n}{mk}}\,\frac{dt}{k^t}
\end{equation}
Changing the order of summation in the sum occurring in the last expression we get
\begin{equation}
\begin{split}
D_2&\ll \frac{n}{\log
   n}\sum_{1\leqslant m\leqslant n/2} \left(\log \frac{n}{m} \right)^{\varepsilon -1} \int_0^\infty
 \frac{f_t(m)}{m}\sum_{2\leqslant k\leqslant\left(\frac{n}{m}\right)^{\frac{1}{1+\delta}}}
   \frac{1}{k^{t+1}}\,dt
   \\
&\ll \frac{n}{\log
   n}\sum_{1\leqslant s\leqslant \log_2\frac{n}{2}}
   \sum_{\frac{n}{2^{s+1}}< m\leqslant \frac{n}{2^{s}}} \left(\log \frac{n}{m} \right)^{\varepsilon -1} \int_0^\infty
 \frac{f_t(m)}{m}\sum_{2\leqslant k\leqslant\left(\frac{n}{m}\right)^{\frac{1}{1+\delta}}}
   \frac{1}{k^{t+1}}\,dt
 \\
&\ll \frac{1}{\log
   n}\sum_{1\leqslant s\leqslant \log_2\frac{n}{2}}s^{\varepsilon
   -1}2^s
   \int_0^\infty \left( F_t\left( \frac{n}{2^{s}} \right)-F_t\left( \frac{n}{2^{s+1}}
   \right)\right)
\sum_{2\leqslant k\leqslant 2^{s+1}}
   \frac{1}{k^{t+1}}\,dt
\end{split}
\end{equation}
Applying here the estimate (\ref{Lestimf_m_3}) for $F_t(x)-F_t(x/2)$ with $x=n2^{-s}$
we further estimate
\begin{equation}
\begin{split}
D_2&\ll \frac{n}{\log
   n}\sum_{1\leqslant s\leqslant \log_2\frac{n}{2}}s^{\varepsilon
   -1}
   \int_0^\infty \left( \frac{1}{\log \frac{n}{2^s}}+t\right)
\sum_{2\leqslant k\leqslant 2^{s+1}}
   \frac{1}{k^{t+1}}\,dt
\\
&\ll \frac{n}{\log
   n}\sum_{1\leqslant s\leqslant \log_2\frac{n}{2}}s^{\varepsilon
   -1}
   \left( \frac{1}{\log \frac{n}{2^s}}\sum_{2\leqslant k\leqslant 2^{s+1}}
   \frac{1}{k\log k}+\sum_{2\leqslant k\leqslant 2^{s+1}}
   \frac{1}{k(\log k)^2}\right)
\\
&\ll n(\log n)^{\varepsilon -1}.
\end{split}
\end{equation}
Thus we have proved that
\[
K_1=\sum_{k \leqslant n^\alpha}c_{n,k}k(\log k)^{\varepsilon}\ll
D_1+D_2+n(\log n)^{\varepsilon-1}\ll n(\log n)^{\varepsilon-1}.
\]
Our estimates of $K_1$, $K_2$ and $K_3$ allow us to evaluate the sum
(\ref{estimate_of_K}) as $O\bigl(n(\log n)^{\varepsilon -1}\bigr)$,
which finally completes the proof of the theorem.
\end{proof}

\begin{proof}[Proof of Theorem \ref{first_thm}]
Plugging $S(n)=o(n\log n)$ into inequality (\ref{main_inequality})
of Theorem \ref{Tmain} and making use of properties (\ref{cond_c_m})
and (\ref{small_k}) of quantities $c_{n,k}$ we conclude that the
right hand side of (\ref{main_inequality}) is $o(n)$. Dividing both
sides of thus obtained inequality by $n$ we complete the proof of
the Theorem.
\end{proof}
\begin{lem}
\label{sufficiency} Suppose $a_k$ is a sequence of complex numbers
such that
\[
A(n)=\sum_{k\leqslant n}a_k\left[\frac{n}{k} \right]=Cn+o(n),
\]
as $n\to \infty$, with some constant $C\in \mathbb{C}$.

Then
\[
S(n)=\sum_{k\leqslant n}a_k\left[\frac{n}{k} \right]\log k=o(n\log n
),
\]
as $n\to \infty$.
\end{lem}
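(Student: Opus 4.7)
The plan is to reduce the problem to the hypothesis on $A(n)$ via a convolution identity, and then to apply standard Tauberian bookkeeping on each of the resulting pieces. Let $f(m)=\sum_{k\mid m}a_k$, so that $A(n)=\sum_{m\leqslant n}f(m)$, and let $f_{*}(m)=\sum_{k\mid m}a_k\log k$, so that $S(n)=\sum_{m\leqslant n}f_{*}(m)$. The key observation is that when $k\mid m$ one can split $\log k=\log m-\log(m/k)$, and the identity $\log\ell=\sum_{d\mid\ell}\Lambda(d)$ together with the change of variable $j=m/k$ gives $\sum_{k\mid m}a_k\log(m/k)=\sum_{b\mid m}\Lambda(b)f(m/b)$. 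This yields the pointwise decomposition
\[
f_{*}(m)=f(m)\log m-\sum_{b\mid m}\Lambda(b)f(m/b),
\]
and summing over $m\leqslant n$, with the order of summation swapped in the second term, produces the master identity
\[
S(n)=\sum_{m\leqslant n}f(m)\log m-\sum_{b\leqslant n}\Lambda(b)A(n/b).
\]

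The proposal is then to show that, under $A(t)=Ct+o(t)$, each of the two sums on the right equals $Cn\log n+o(n\log n)$, and the lemma follows by cancellation. For the first sum, Abel summation yields
\[
\sum_{m\leqslant n}f(m)\log m=A(n)\log n-\int_{1}^{n}\frac{A(t)}{t}\,dt,
\]
and substituting the hypothesis gives $A(n)\log n=Cn\log n+o(n\log n)$ and $\int_{1}^{n}A(t)t^{-1}\,dt=Cn+o(n)$. For the second sum I would combine the classical estimate $\sum_{b\leqslant n}\Lambda(b)/b=\log n+O(1)$ with the decomposition $A(n/b)=C(n/b)+\varepsilon(n/b)(n/b)$, where $\varepsilon(x)=A(x)/x-C$ tends to zero and is bounded on $[1,\infty)$. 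The main term contributes $Cn\log n+O(n)$, and what is left to control is the error term $n\sum_{b\leqslant n}\Lambda(b)\varepsilon(n/b)/b$.

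The main technical obstacle is that the little-$o$ in $A(t)=Ct+o(t)$ is not uniform in $t$, so one cannot simply factor $\varepsilon(n/b)$ out of the sum: when $b$ is close to $n$ the argument $n/b$ is small and $\varepsilon(n/b)$ need not be small. The standard remedy is to introduce a parameter $T=T(n)\to\infty$ slowly enough that $\log T=o(\log n)$, and to split the sum at $b=n/T$. On the range $b\leqslant n/T$ one has $n/b\geqslant T$, hence $|\varepsilon(n/b)|\leqslant\sup_{x\geqslant T}|\varepsilon(x)|=o(1)$, while the weight $\sum_{b\leqslant n/T}\Lambda(b)/b\leqslant \log n+O(1)$ gives a contribution $o(\log n)$ to the bracketed sum. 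On the complementary range $n/T<b\leqslant n$ the factor $\varepsilon$ is only bounded, but the Mertens-type estimate gives $\sum_{n/T<b\leqslant n}\Lambda(b)/b=\log T+O(1)=o(\log n)$, which again suffices. The analogous splitting controls $\int_{1}^{n}(A(t)/t-C)\,dt=o(n)$ in the first sum, closing the argument.
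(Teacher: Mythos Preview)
Your proposal is correct and follows essentially the same route as the paper: both derive the convolution identity $S(n)=\sum_{m\leqslant n}f(m)\log m-\sum_{b\leqslant n}\Lambda(b)A(n/b)$ (the paper obtains it from $\zeta(s)g'(s)=U'(s)-\tfrac{\zeta'(s)}{\zeta(s)}U(s)$, you from the elementary splitting $\log k=\log m-\log(m/k)$), then substitute $A(t)=Ct+o(t)$ and invoke $\sum_{b\leqslant n}\Lambda(b)/b=\log n+O(1)$. Your treatment is in fact more careful than the paper's: the paper simply asserts that inserting $A(n)=Cn+o(n)$ into $\sum_{k\leqslant n}\Lambda(k)A(n/k)$ yields $Cn\log n+o(n\log n)$, whereas you correctly flag the non-uniformity of the $o(\cdot)$ for small $n/b$ and supply the standard splitting at $b=n/T$ to close the gap.
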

\begin{proof}
The equality of $f(m)$ to the sum $\sum_{d|m}a_d$ is equivalent to
identity
\[
U(s)=\sum_{m=1}^\infty\frac{f(m)}{m^s}=\sum_{m=1}^\infty
\frac{1}{m^s}\sum_{m=1}^\infty \frac{a_m}{m^s}=\zeta(s)g(s).
\]
 Therefore
\[
\zeta(s)g'(s)=(\zeta(s)g(s))'-\zeta'(s)g(s)=U'(s)-\frac{\zeta'(s)}{\zeta(s)}U(s)
\]
this identity corresponds to the equality of the coefficients of
$m^{-s}$ of the corresponding Dirichlet series
\[
\sum_{k|m}a_k\log k=f(m)\log m -\sum_{k\ell=m}\Lambda(k)f(\ell),
\]
for all $m\geqslant 1$. Summing the above identity over all $m$ such
that $m\leqslant n$ and recalling that $f(1)+f(2)+\cdots+f(k)=A(k)$
we get
\begin {equation}
\begin{split}
S(n)&= \sum_{k=1}^nf(k)\log k-\sum_{k\ell\leqslant
n}\Lambda(k)f(\ell)\\
& =\sum_{k=1}^n\bigl(A(k)-A(k-1)\bigr)\log k-\sum_{k\leqslant
n}\Lambda(k)A\left(\frac{n}{k} \right)\\
& =A(n)\log n-\sum_{k=1}^nA(k)\log \left(1+\frac{1}{k}
\right)-\sum_{k\leqslant n}\Lambda(k)A\left(\frac{n}{k} \right).
\end{split}
\end{equation}
By condition of the lemma $A(n)=Cn+o(n)$. Inserting this estimate
into the above expression of $S(n)$ we get
\begin {equation}
\begin{split}
S(n) & =A(n)\log n-\sum_{k\leqslant n}\Lambda(k)A\left(\frac{n}{k}
\right)+O(n)\\
& =Cn\log n-Cn\sum_{k\leqslant n}\frac{\Lambda(k)}{k}+o(n\log
n)=o(n\log n),
\end{split}
\end{equation}
where we have used the fact that $\sum_{k\leqslant
n}\frac{\Lambda(k)}{k}=\log n +O(n)$.

  The lemma is proved.
\end{proof}

\begin{proof}[Proof of theorem \ref{Ttauberian_dirichlet}]
The sufficiency of the two conditions of the  theorem for the
existence of the limit of the sum (\ref{F_a_k}) follows immediately
from the Theorem \ref{first_thm}.

The necessity of the first condition of the theorem follows from
Lemma \ref{sufficiency}. The necessity of the second condition will
follow if we note that function $g(s)$ can be represented as a
fraction
\[
g(s)=\frac{\zeta(s)g(s)}{\zeta(s)}=\frac{\sum_{m=1}^\infty\frac{f(m)}{m^s}}{\sum_{m=1}^\infty\frac{1}{m^s}},
\]
where as before $f(m)=\sum_{d|m}a_d$. The partial sums of the
coefficients of the Dirichlet series in the nominator satisfies
$f(1)+f(2)+\cdots+f(n)=\sum_{k=1}^na_k\left[\frac{n}{k}\right]=Cn+o(n)$,
by our assumtion. Thus passing to the limit $s\downarrow 1$ we
conclude that $\lim_{s \downarrow 1} g(s)=C$.
\end{proof}
\begin{proof}[Proof of theorem \ref{T_multiplicative}]
Once we are given the values of $f$  on prime numbers $p$ such that
$p\leqslant n$ we can compute the value of function $f$ on any
integer $m$ such that $m\leqslant n$. The numbers $f(p)$, with $p>n$
do not influence the value of the quantity
\[
\frac{1}{n}\sum_{m=1}^nf(m),
\]
therefore we will assume that $f(p)=1$ for $p>n$.

We have already noted that if $f(m)=\sum_{d|m}a_d$ then the
Dirichlet generating function $U(s)$ of $f(m)$ can be represented as
a product
\[
U(s)=\sum_{m=1}^\infty\frac{f(m)}{m^s}=\zeta(s)g(s).
\]
On the other hand  by the condition of the theorem \(f(m)\) is a
multiplicative function, which means that its generating function
can be represented as Euler product
\[
U(s)=\sum_{m=1}^\infty\frac{f(m)}{m^s}=\prod_{p}\left(1-\frac{f(p)}{p^s}
\right)^{-1}.
\]
Comparing the above two expressions of $U(s)$ we conclude that that
the Dirichlet series of numbers $a_m$ such that $f(m)=\sum_{d|m}a_d$
is
\[
g(s)=\sum_{m=0}^\infty\frac{a_m}{m^s}=\frac{1}{\zeta(s)}\prod_{p}\left(1-\frac{f(p)}{p^s}
\right)^{-1}=\prod_{p}\frac{1-\frac{1}{p^s} }{1-\frac{f(p)}{p^s}
}=\exp\Biggl\{\sum_{p}\sum_{k\geqslant 1}
\frac{f(p^k)-1}{kp^{ks}}\Biggr\}.
\]
Differentiating this expression of $g(s)$ we obtain that this
function satisfies differential equation
$g'(s)=-g(s)\sum_{m=1}^\infty \frac{f(m)-1}{m^s}\Lambda(m)$.
Multiplying both sides of this equation by $\zeta(s)$ and using the
fact that  the fact that $U(s)=\zeta(s)g(s)$ we obtain an identity
\[
\zeta(s)g'(s)=-U(s)\sum_{m=1}^\infty \frac{f(m)-1}{m^s}\Lambda(m).
\]
or equivalently
\[
\sum_{m=1}^\infty \frac{1}{m^s}\sum_{k=1}^\infty \frac{a_k\log
k}{k^s}=\sum_{k=1}^\infty\frac{f(k)}{k^s}\sum_{m=1}^\infty
\frac{f(m)-1}{m^s}\Lambda(m).
\]
Equating the coefficients of $d^{-s}$ in the  Dirichlet series on
both sides of the above identity and summing over all $d$ such that
$d\leqslant m$ we obtain
\[
\begin{split}
S(m)&=\sum_{d\leqslant m}\sum_{k|d}a_k\log k=\sum_{d\leqslant m}
\sum_{k|d}\bigl(f(k)-1\bigr)\Lambda(k)f\left(\frac{d}{k}\right)
\\
&=\sum_{k\leqslant m}
\bigl(f(k)-1\bigr)\Lambda(k)\sum_{\ell\leqslant m/k}f(\ell )
\end{split}
\]
 Therefore,
recalling that according to the condition of the theorem
$|f(m)|\leqslant 1$, and recalling that $f(p)=1$ for $p>n$, we can
estimate
\begin{equation}
\begin{split}
|S(m)|&\leqslant \sum_{k\leqslant m}
\bigl|f(k)-1\bigr|\Lambda(k)\Bigl|\sum_{\ell\leqslant
m/k}f(\ell)\Bigr|=\sum_{k\leqslant
m}\bigl|f(k)-1\bigr|\left[\frac{m}{k}\right]\Lambda(k)
\\
&
\ll m \sum_{p\leqslant m}\frac{\bigl|f(p)-1\bigr|}{p}\log p \ll
m(\log m)^{1/\beta} \left(\sum_{p\leqslant
n}\frac{\bigl|f(p)-1\bigr|^\alpha}{p}\log p\right)^{1/\alpha}
\\
&\ll   m(\log m)^{1/\beta}(\log n)^{1/\alpha}\mu_n(\alpha),
\end{split}
\end{equation}
for \(m\geqslant 2\). Here we have applied the Cauchy inequality
with parameters \(\frac{1}{\alpha}+\frac{1}{\beta}=1\). Inserting
this estimate of \(S(n)\) into the inequality of Theorem \ref{Tmain}
with \(\varepsilon=\frac{1}{\beta}\) we get
\[
\frac{1}{n}\sum_{m=1}^nf(m)=g\left(1+\frac{1}{\log
n}\right)+O(\mu_n(\alpha)),
\]
An easy calculation yields
\[
g\left(1+\frac{1}{\log n}\right)=\prod_{p\leqslant
n}\frac{1-\frac{1}{p^{1+{1}/{\log n}}}
}{1-\frac{f(p)}{p^{1+{1}/{\log n}}} }=\prod_{p\leqslant
n}\frac{1-\frac{1}{p} }{1-\frac{f(p)}{p}
}\bigl(1+O(\mu_n(\alpha))\bigr).
\]
Hence follows the proof of the theorem.
\end{proof}

\bibliographystyle{plain}
\def\polhk#1{\setbox0=\hbox{#1}{\ooalign{\hidewidth
  \lower1.5ex\hbox{`}\hidewidth\crcr\unhbox0}}}
  \def\polhk#1{\setbox0=\hbox{#1}{\ooalign{\hidewidth
  \lower1.5ex\hbox{`}\hidewidth\crcr\unhbox0}}}
  \def\polhk#1{\setbox0=\hbox{#1}{\ooalign{\hidewidth
  \lower1.5ex\hbox{`}\hidewidth\crcr\unhbox0}}}
  \def\polhk#1{\setbox0=\hbox{#1}{\ooalign{\hidewidth
  \lower1.5ex\hbox{`}\hidewidth\crcr\unhbox0}}}
  \def\polhk#1{\setbox0=\hbox{#1}{\ooalign{\hidewidth
  \lower1.5ex\hbox{`}\hidewidth\crcr\unhbox0}}}
  \def\polhk#1{\setbox0=\hbox{#1}{\ooalign{\hidewidth
  \lower1.5ex\hbox{`}\hidewidth\crcr\unhbox0}}}

\end{document}